\newtheorem{thm}{Theorem}[section]
\newtheorem{lem}{Lemma}[section]
\newtheorem{ass}{Assumption}[section]
\theoremstyle{definition}
\newtheorem{defn}{Definition}[section]
\theoremstyle{Condition}
\theoremstyle{remark}
\numberwithin{equation}{section}
\theoremstyle{example}
\numberwithin{equation}{section}
\begin{document}

\bigskip
\bigskip

\bigskip

\begin{center}

\textbf{\large A stochastic coordinate descent splitting primal-dual
fixed point algorithm and applications to large-scale composite
optimization}

\end{center}

\begin{center}
Meng Wen $^{1}$, Yu-Chao Tang$^{2}$, Jigen Peng$^{1}$
\end{center}

\begin{center}
1. School of Mathematics and Statistics, Xi'an Jiaotong University,
Xi'an 710049, P.R. China \\
2. Department of Mathematics, NanChang
University, Nanchang 330031, P.R. China
\end{center}

\footnotetext{\hspace{-6mm}$^*$ Corresponding author.\\
E-mail address: wen5495688@163.com}

\bigskip

\noindent  \textbf{Abstract} In this paper, we consider the problem
of finding the minimizations of the sum  of two convex functions and
the composition of another convex function with a continuous linear
operator from the view of fixed point algorithms based on proximity
operators, which is is inspired by recent results of Chen, Huang and
Zhang. With the idea of coordinate descent, we design a stochastic
coordinate descent splitting primal-dual fixed point algorithm.
Based on randomized krasnosel'skii mann iterations and the firmly
nonexpansive properties of the proximity operator, we achieve the
convergence of the proposed  algorithms. Moreover, we give two
applications of our method. (1) In the case of stochastic minibatch
optimization, the algorithm can be applicated to split a composite
objective function into blocks, each of these blocks being processed
sequentially by the computer. (2) In the case of distributed
optimization, we consider a set of $N$ networked agents endowed with
private cost functions and seeking to find a consensus on the
minimizer of the aggregate cost. In that case, we obtain a
distributed iterative algorithm where isolated components of the
network are activated in an uncoordinated fashion and passing in an
asynchronous manner.  Finally, we illustrate the efficiency of the
method in the framework of large scale machine learning
applications. Generally speaking, our method SCDSPDFP$^{2}O$ is
comparable with other state-of-the-art methods in numerical
performance, while it has some advantages on parameter selection in
real applications.

\bigskip
\noindent \textbf{Keywords:} fixed point algorithm; coordinate
descent; proximity operator; distributed optimization

\noindent \textbf{MR(2000) Subject Classification} 47H09, 90C25,

\section{Introduction}

In this paper, we aim at solving the following minimization problem
$$\min_{x\in\mathcal{X}} f(x)+g(x)+ (h\circ D)(x),\eqno{(1.1)}$$
where $\mathcal{X}$ and $\mathcal{Y}$ are two Euclidean spaces,
$f,g\in\Gamma_{0}(\mathcal{X})$, $h\in\Gamma_{0}(\mathcal{Y}),$ and
$f$ is differentiable on $\mathcal{Y}$ with a $1/\beta$-Lipschitz
continuous gradient for some $\beta\in(0,+\infty)$ and
$D:\mathcal{X}\rightarrow\mathcal{Y}$ a linear transform. This
parameter $\beta$ is related to the convergence conditions of
algorithms presented in the following section. Here and in what
follows, for a real Hilbert space $\mathcal{H}$,
$\Gamma_{0}(\mathcal{H})$ denotes the collection of all proper lower
semi-continuous convex functions from $\mathcal{H}$ to
$(-\infty,+\infty]$. Despite its simplicity, when $g=0$ many
problems in image processing can be formulated in the form of (1.1).
For instance, the following variational sparse recovery models are
often considered in image restoration and medical image
reconstruction:
$$\min\frac{1}{2}\|Ax-b\|_{2}^{2}+\lambda\psi(Dx),\eqno{(1.2)}$$
where $\|\cdot\|_{2}$ denotes the usual Euclidean norm for a vector,
$A\in\mathbb{R}^{p\times n}$ describes  a blur operator, $b\in
\mathbb{R}^{p}$ represents the blurred and noisy image  and
$\lambda>0$ is the regularization parameter in the context of
deblurring and denoising of images.
\par
  For
problem (1.2),  Chen et al proposed
 a primal-dual fixed point algorithm($PDFP^{2}O)$ in [1], i.e.
$$
\left\{
\begin{array}{l}
v_{n+1}=(I-prox_{\frac{\gamma}{\lambda}f_{1}})(D(x_{n}-\gamma\nabla f_{2}(x_{n}))+(I-\lambda DD^{T})v_{n}),\\
x_{n+1}=x_{n}-\gamma\nabla f_{2}(x_{n})-\lambda D^{T}v_{n+1},
\end{array}
\right.\eqno{(1.3)}
$$
where $0<\lambda\leq1/\lambda_{\max}(DD^{T})$, $0<\gamma<2\beta$,
and the operator $ prox_{f}$ is called the proximity operator of $f$
. Note that this type of splitting method was originally studied in
[10,11] and the notion of proximity operators was first introduced
by Moreau in [12] as a generalization of projection operators.
Motivated and inspired by the above results, we introduced a
 splitting primal-dual fixed point
algorithm. The contributions of us are the following aspects:
\par
(I) The algorithm that we proposed includes the well known PFPS [13]
and $FP^{2}O$ [14] as a special case. Moreover, the idea based on
the results of  Chen et al [1], and the obvious advantage of the
proposed scheme is that it is very easy for parallel implementation.
\par
(II) Based on the results of Chen et al [1] and Bianchi et al [2],
we introduce the idea of stochastic coordinate descent on splitting
primal-dual fixed point algorithm. The form of splitting primal-dual
fixed point algorithm can be translated into fixed point iterations
of a given operator having a nonexpansive property. By the view of
stochastic coordinate descent, we know that at each iteration, the
algorithm is only to update  a random subset of coordinates.
Although this leads to a perturbed version of the initial splitting
primal-dual fixed point iterations, but it can be proved to preserve
the convergence properties of the initial unperturbed version.
Moreover, stochastic coordinate descent has been used in the
literature [15-17] for proximal gradient algorithms. We believe that
its application to splitting primal-dual fixed point algorithm well
suited to large-scale optimization problems.
\par
(III) We use our views to large-scale optimization problems which
arises in signal processing and machine learning contexts. We prove
that the general idea of stochastic coordinate descent gives a
unified framework allowing to derive stochastic algorithms of
different kinds. Furthermore, we give two application examples.
Firstly, we propose a new stochastic approximation algorithm by
applying stochastic coordinate descent on the top of SPDFP$^{2}$O.
The algorithm is called as stochastic minibatch splitting
primal-dual fixed point algorithm (SMSPDFP$^{2}$O) Secondly, we
introduce a random asynchronous distributed optimization methods
that we call as distributed asynchronous splitting primal-dual fixed
point algorithm (DASMSPDFP$^{2}$O). The algorithm can be used to
efficiently solve an optimization problem over a network of
communicating agents.  The algorithms are asynchronous in the sense
that some components of the network are allowed to wake up at random
and perform local updates, while the rest of the network stands
still. No coordinator or global clock is needed. The frequency of
activation of the various network components is likely to vary.

The rest of this paper is organized as follows. In the next section,
 we introduce some notations used throughout in the paper. In section 3, we devote
to introduce  SPDFP$^{2}$O algorithm and its relation with the
PDFP$^{2}$O, we also show how the SPDFP$^{2}$O includes PDFP$^{2}$O
as a special case. In section 4, we propose a stochastic
approximation algorithm from the SPDFP$^{2}$O.  In section 5, we
addresse the problem of asynchronous distributed optimization. In
the final section, we show the numerical performance and efficiency
of propose algorithm through some examples in the context of
large-scale $l_{1}$-regularized logistic regression.

\section{Preliminaries }
Throughout the paper, we  denote by $\langle \cdot, \cdot\rangle$
the inner product on $\mathcal{X}$ and by $\|\cdot\|$ the norm on
$\mathcal{X}$. We consider the case where $D$ is injective(in
particular, it is implicit that dim$(\mathcal{X})\leq$
dim$(\mathcal{Y}))$. In the latter case, we denote by $\mathcal{R}$
= Im$(D)$ the image of $D$ and by $D^{-1}$ the inverse of $D$ on
$\mathcal{R}\rightarrow\mathcal{X}$. We emphasize the fact that the
inclusion $\mathcal{R}\subset\mathcal{Y}$ might be strict. We denote
by $\nabla$ the gradient operator. We make the following
assumptions:
\begin{ass}
The following facts holds true:\\
 (1)$D$ is injective;\\
 (2)$f$  has $1/\beta$-Lipschitz continuous gradient.
\end{ass}
\begin{ass}
The infimum of problem (1.1) is attained. Moreover, the following
qualification condition holds
$$0\in ri(dom\, h-D\, dom\, g).$$
\end{ass}
\begin{defn}
 Let $f$ be  a real-valued convex function on
$\mathcal{X}$, the operator prox$_{f}$ is defined by
\begin{align*}
prox_{f}&:\mathcal{H}\rightarrow\mathcal{H}\\
& x\mapsto \arg \min_{y\in H}f(y)+\frac{1}{2}\|x-y\|_{2}^{2},
\end{align*}
called the proximity operator of $f$.

\end{defn}

\begin{defn}
Let $A$ be a closed convex set of $\mathcal{X}$. Then the indicator
function of $A$ is defined as
$$
\iota_{A}(x) = \left\{
\begin{array}{l}
0,\,\,\, \,\,if x\in A,\\
\infty,\,\,\, otherwise .
\end{array}
\right.
$$
\end{defn}
It can easy see the proximity operator of the indicator function in
a closed convex subset $A$ can be reduced a projection operator onto
this closed convex set $A$. That is,
$$prox_{\iota_{A}}=proj_{A}$$
where proj is the projection operator of $A$.

\begin{defn}
(Nonexpansive operators and firmly nonexpansive operators [4]). An
operator $T : \mathcal{H} \rightarrow {\mathcal{H}}$ is nonexpansive
if and only if it satisfies
$$\|Tx-Ty\|_{2}\leq\|x-y\|_{2}\,\,\, for\,\,all\,\,\, (x,y)\in \mathcal{H}^{2}.$$
$T$ is firmly nonexpansive if and only if it satisfies one of the
following equivalent conditions:
\par
(i)$\|Tx-Ty\|_{2}^{2}\leq\langle Tx-Ty,x-y\rangle$\,\,\,
for\,\,all\,\,\, $(x,y)\in \mathcal{H}^{2}$;
\par
(ii)$\|Tx-Ty\|_{2}^{2}=\|x-y\|_{2}^{2}-\|(I-T)x-(I-T)y\|_{2}^{2}$\,\,\,
for\,\,all\,\,\, $(x,y)\in \mathcal{H}^{2}$.
\par
It is easy to show from the above definitions that a firmly
nonexpansive operator $T$ is nonexpansive.
\end{defn}
\begin{lem}
(Lemma 2.4 of [3]). Let $f$ be a function in
$\Gamma_{0}(\mathcal{X})$. Then $prox_{f}$ and $I- prox_{f}$ are
both firmly nonexpansive operators.
\end{lem}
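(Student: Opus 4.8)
The plan is to identify the proximity operator with the resolvent of the subdifferential, $prox_{f}=(I+\partial f)^{-1}$, and then read off firm nonexpansiveness from the monotonicity of $\partial f$. First I would record the first-order optimality condition for the problem defining the prox. Since $f\in\Gamma_{0}(\mathcal{X})$, the map $y\mapsto f(y)+\frac{1}{2}\|x-y\|_{2}^{2}$ is proper, lower semicontinuous and strongly convex, hence coercive; therefore it admits a unique minimizer $p=prox_{f}(x)$, characterized by Fermat's rule $0\in\partial f(p)+(p-x)$, that is, $x-p\in\partial f(p)$. This single-valuedness and the optimality characterization are the only facts about the prox that the rest of the argument needs.

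Next, to establish condition (i) of the definition of firm nonexpansiveness for $T=prox_{f}$, I would fix $x,y$ and set $p=prox_{f}(x)$, $q=prox_{f}(y)$, so that $x-p\in\partial f(p)$ and $y-q\in\partial f(q)$. Applying the subgradient inequality at each point gives $f(q)\geq f(p)+\langle x-p,q-p\rangle$ and $f(p)\geq f(q)+\langle y-q,p-q\rangle$. Adding these two inequalities cancels $f(p)$ and $f(q)$, and after rearranging the inner products one arrives at $\|p-q\|_{2}^{2}\leq\langle p-q,\,x-y\rangle$, which is exactly condition (i) with $Tx=p$ and $Ty=q$. Thus $prox_{f}$ is firmly nonexpansive.

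Finally, for $I-prox_{f}$ I would invoke the purely algebraic fact that, for any operator $T$, condition (i) holds for $T$ if and only if it holds for $S=I-T$. Expanding $\|Sx-Sy\|_{2}^{2}=\|(x-y)-(Tx-Ty)\|_{2}^{2}$ and $\langle Sx-Sy,\,x-y\rangle=\|x-y\|_{2}^{2}-\langle Tx-Ty,\,x-y\rangle$ by bilinearity and subtracting, the desired inequality for $S$ reduces term by term to $\|Tx-Ty\|_{2}^{2}\leq\langle Tx-Ty,\,x-y\rangle$, the firm nonexpansiveness of $T$ already proved. (Alternatively, this symmetry is immediate from the equivalent form (ii), which is invariant under $T\mapsto I-T$.) Hence $I-prox_{f}$ is firmly nonexpansive as well.

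The main obstacle is the clean justification of the subdifferential step: one must argue that the prox is single-valued and that the sum rule $\partial\big(f+\tfrac{1}{2}\|\cdot-x\|_{2}^{2}\big)=\partial f+(\cdot-x)$ applies at the minimizer, which is legitimate here because the quadratic term is finite-valued and continuous on all of $\mathcal{X}$. Once the resolvent identity $prox_{f}=(I+\partial f)^{-1}$ is in place, both the monotonicity computation and the $T\mapsto I-T$ passage are routine. I would keep the subdifferential route since it is the shortest; a fully elementary variant replaces the subgradient inequalities by the variational characterizations of the two minimizers and yields the same two inner-product estimates.
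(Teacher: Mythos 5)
Your proof is correct and complete: the optimality condition $x-p\in\partial f(p)$, the addition of the two subgradient inequalities to get $\|p-q\|_{2}^{2}\leq\langle p-q,\,x-y\rangle$, and the algebraic equivalence of condition (i) under $T\mapsto I-T$ (or the symmetry of form (ii)) are all sound, and you correctly flag the only delicate points (single-valuedness from strong convexity, validity of the sum rule since the quadratic is finite and continuous, finiteness of $f(p)$ and $f(q)$ for the cancellation). The paper itself supplies no proof—it simply cites Lemma 2.4 of Combettes and Wajs [3]—and your argument is precisely the standard one given there, so there is nothing to contrast.
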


For an element $u=(v,x)\in \mathcal{Y}\times\mathcal{X}$, with $v\in
\mathcal{Y}$ and $x\in\mathcal{X}$, let
$$\|u\|_{\lambda}=\sqrt{\|x\|_{2}^{2}+\lambda\|v\|_{2}^{2}}.$$
 We can easily see that
$\|\cdot\|_{\lambda}$ is a norm over the produce space
$\mathcal{Y}\times\mathcal{X}$ whenever $\lambda > 0$.

\begin{lem}
([1]). Let Assumptions 2.2 hold true.  If $0<\gamma<2\beta$,
$0<\lambda\leq1/\lambda_{max}(\tilde{D}\tilde{D}^{T})$, Let
$(\tilde{v}^{k+1}, x^{k+1}) = T(\tilde{v}^{k}, x^{k})$ where $T$ is
the transformation described by Equations (3.3). Then $T$ is
nonexpansive under the norm $\|\cdot\|_{\lambda}$.
\end{lem}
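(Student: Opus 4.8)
The plan is to verify nonexpansiveness directly from (3.3) by propagating the difference of two trajectories. Take $u=(\tilde v,x)$ and $u'=(\tilde v',x')$ in $\mathcal Y\times\mathcal X$ and set $(\tilde v^{+},x^{+})=T(u)$, $(\tilde v'^{+},x'^{+})=T(u')$. Abbreviating the forward step by $z=x-\gamma\nabla f(x)$ and $z'=x'-\gamma\nabla f(x')$, equations (3.3) take, in the present notation, the form
$$\tilde v^{+}=\big(I-prox_{\frac{\gamma}{\lambda}h}\big)\big(\tilde Dz+(I-\lambda\tilde D\tilde D^{T})\tilde v\big),\qquad x^{+}=z-\lambda\tilde D^{T}\tilde v^{+},$$
and likewise for the primed iterate. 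Writing $p=z-z'$, $q=\tilde v-\tilde v'$, $r=\tilde v^{+}-\tilde v'^{+}$ and $M=I-\lambda\tilde D\tilde D^{T}$, the aim is to show $\|Tu-Tu'\|_{\lambda}^{2}=\|x^{+}-x'^{+}\|^{2}+\lambda\|r\|^{2}\le\|x-x'\|^{2}+\lambda\|q\|^{2}=\|u-u'\|_{\lambda}^{2}$.

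First I would record the two ingredients that drive the estimate. Because $f$ has a $1/\beta$-Lipschitz gradient, $\nabla f$ is $\beta$-cocoercive (Baillon--Haddad), so the forward step satisfies
$$\|p\|^{2}=\|(x-x')-\gamma(\nabla f(x)-\nabla f(x'))\|^{2}\le\|x-x'\|^{2}-\gamma(2\beta-\gamma)\|\nabla f(x)-\nabla f(x')\|^{2}\le\|x-x'\|^{2},$$
where nonnegativity of the middle correction uses $0<\gamma<2\beta$. Second, $0<\lambda\le 1/\lambda_{\max}(\tilde D\tilde D^{T})$ is equivalent to $\lambda\tilde D\tilde D^{T}\preceq I$, so $M$ is symmetric positive semidefinite and $\lambda\|\tilde D^{T}w\|^{2}=\lambda\langle\tilde D\tilde D^{T}w,w\rangle\le\|w\|^{2}$ for every $w\in\mathcal Y$.

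Next I would bring in the dual map $S=I-prox_{\frac{\gamma}{\lambda}h}$, which is firmly nonexpansive by Lemma 2.1. Writing $a=\tilde Dz+M\tilde v$ and $a'=\tilde Dz'+M\tilde v'$ for the two arguments fed to $S$, so that $r=Sa-Sa'$ and $a-a'=\tilde Dp+Mq$, which I abbreviate by $b$, condition (ii) of Definition 2.4 yields
$$\lambda\|r\|^{2}\le\lambda\|b\|^{2}-\lambda\|b-r\|^{2}=2\lambda\langle b,r\rangle-\lambda\|r\|^{2}.$$
Expanding $\|x^{+}-x'^{+}\|^{2}=\|p-\lambda\tilde D^{T}r\|^{2}$ and adding $\lambda\|r\|^{2}$, the primal cross term $-2\lambda\langle\tilde Dp,r\rangle$ meets the term $2\lambda\langle b,r\rangle=2\lambda\langle\tilde Dp,r\rangle+2\lambda\langle Mq,r\rangle$ coming from the bound above, and the two $\langle\tilde Dp,r\rangle$ contributions cancel exactly. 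This cancellation is the whole reason the weighted norm $\|\cdot\|_{\lambda}$, rather than the plain Euclidean norm, is the right object, and it leaves
$$\|Tu-Tu'\|_{\lambda}^{2}\le\|p\|^{2}+\lambda^{2}\|\tilde D^{T}r\|^{2}+2\lambda\langle Mq,r\rangle-\lambda\|r\|^{2}.$$

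The heart of the argument, and the step I expect to be the main obstacle, is to show that the dual block residual is dominated by $\lambda\|q\|^{2}$, i.e. that $\lambda\|\tilde D^{T}r\|^{2}+2\langle Mq,r\rangle-\|r\|^{2}\le\|q\|^{2}$. The clean way to see this is to rearrange the gap as
$$\|q\|^{2}+\|r\|^{2}-2\langle Mq,r\rangle-\lambda\|\tilde D^{T}r\|^{2}=\|q-r\|^{2}-\lambda\|\tilde D^{T}(q-r)\|^{2}+\lambda\|\tilde D^{T}q\|^{2},$$
which uses only $I-M=\lambda\tilde D\tilde D^{T}$ together with the polarization $2\langle\tilde D^{T}q,\tilde D^{T}r\rangle-\|\tilde D^{T}r\|^{2}=\|\tilde D^{T}q\|^{2}-\|\tilde D^{T}(q-r)\|^{2}$. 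The right-hand side is nonnegative: the spectral bound of the second ingredient gives $\lambda\|\tilde D^{T}(q-r)\|^{2}\le\|q-r\|^{2}$, and $\lambda\|\tilde D^{T}q\|^{2}\ge0$. Combining this with $\|p\|^{2}\le\|x-x'\|^{2}$ yields $\|Tu-Tu'\|_{\lambda}^{2}\le\|x-x'\|^{2}+\lambda\|q\|^{2}=\|u-u'\|_{\lambda}^{2}$, which is the claim. I anticipate the only real difficulty to be the bookkeeping that produces the rearrangement identity and pins down exactly where $0<\gamma<2\beta$ and $\lambda\le 1/\lambda_{\max}(\tilde D\tilde D^{T})$ enter; no further structure of $h$, $g$ or $D$ is needed beyond injectivity of $\tilde D$ and the firm nonexpansiveness supplied by Lemma 2.1.
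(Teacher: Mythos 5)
Your argument is correct and complete: the combination of the Baillon--Haddad cocoercivity bound for the forward step, firm nonexpansiveness of $I-\mathrm{prox}_{\frac{\gamma}{\lambda}\tilde h}$, and the spectral condition $\lambda\tilde D\tilde D^{T}\preceq I$, with the exact cancellation of the $\langle\tilde Dp,r\rangle$ cross terms under the weighted norm $\|\cdot\|_{\lambda}$, is precisely the mechanism behind this lemma. The paper itself gives no proof (it cites reference [1]), and your verification follows essentially the same route as the source, so nothing further is needed; the only cosmetic points are that the firm-nonexpansiveness condition you invoke is item (ii) of Definition 2.3 (not 2.4), and that injectivity of $\tilde D$ is in fact never used in this computation.
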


\begin{defn}
(Randomized krasnosel'skii mann iterations[2]). Let $\mathcal{V}$ be
a Euclidean space. Consider the space
$\mathcal{V}=\mathcal{V}_{1}\times\cdots\times\mathcal{V}_{J}$ for
some $J\in\mathbb{N}^{\ast}$ where for any $j$, $\mathcal{V}_{j}$ is
a Euclidean space. For $\mathcal{V}$ equipped with the scalar
product $\langle x,y\rangle=\sum_{j=1}^{J}\langle
x_{j},y_{j}\rangle_{\mathcal{V}_{j}}$ where $\langle
\cdot,\cdot\rangle_{\mathcal{V}_{j}}$ is the scalar product in
$\mathcal{V}_{j}$. For $j\in \{1,\cdots,J\}$ , let $T_{j}:
\mathcal{V}\rightarrow\mathcal{V}_{j}$ be the components of the
output of operator $T : \mathcal{V}\rightarrow\mathcal{V}$
corresponding to $\mathcal{V}_{j}$ , so, we have
$Tx=(T_{1}x,\cdots,T_{J}x)$. Let $2^{\mathcal{J}}$ be the power set
of $\mathcal{J}=\{1,\cdots,J\}$. For any $\kappa\in
2^{\mathcal{J}}$, we donate the operator $\hat{T}^{\kappa}:
\mathcal{V}\rightarrow\mathcal{V}$ by $\hat{T}^{\kappa}_{j}x=T_{j}x$
for $j\in\kappa$ and $\hat{T}^{\kappa}_{j}x=x_{j}$ for otherwise. On
some probability space $(\Omega, \mathcal{F}, \mathbb{P})$, we
introduce a random i.i.d. sequence
$(\zeta^{k})_{k\in\mathbb{N}^{\ast}}$ such that
$\zeta^{k}:\Omega\rightarrow 2^{\mathcal{J}}$ i.e.
$\zeta^{k}(\omega)$ is a subset of $\mathcal{J}$. Assume that the
following holds:
$$\forall j\in \mathcal{J},\exists \kappa\in 2^{\mathcal{J}}, j\in\kappa~~~ and ~~~\mathbb{P}(\zeta_{1}=\kappa)>0.\eqno{(2.1)}$$
\end{defn}

\begin{lem}
(Theorem 3 of [2]). Let $T: \mathcal{V}\rightarrow\mathcal{V}$ be
$\alpha$-averaged and Fix(T)$\neq\emptyset$. Let
$(\zeta^{k})_{k\in\mathbb{N}^{\ast}}$ be a random i.i.d. sequence on
$2^{\mathcal{J}}$ such that Condition (2.1) holds. If for all $k$,
sequence $(\beta_{k})_{k\in\mathbb{N}}$ satisfies
$$0<\liminf_{k}\beta_{k}\leq\limsup_{k}\beta_{k}<\frac{1}{\alpha}.$$
Then, almost surely, the iterated sequence

$$x^{k+1}=x^{k}+\beta_{k}(\hat{T}^{(\zeta^{k+1})}x^{k}-x^{k})\eqno{(2.2)}$$
converges to some point in Fix(T ).
\end{lem}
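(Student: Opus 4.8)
The plan is to establish a stochastic quasi-Fej\'er monotonicity property for the iterates with respect to $\mathrm{Fix}(T)$, and then combine a supermartingale convergence argument with a pathwise Opial-type lemma to obtain almost sure convergence. Throughout, let $\mathcal{F}_{k}=\sigma(\zeta^{1},\dots,\zeta^{k})$ be the natural filtration, so that $x^{k}$ is $\mathcal{F}_{k}$-measurable while $\zeta^{k+1}$ is independent of $\mathcal{F}_{k}$. For each block index $j\in\mathcal{J}$ set $p_{j}=\mathbb{P}(j\in\zeta^{1})$; Condition (2.1) guarantees $p_{j}>0$ for every $j$.

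First I would introduce the weighted norm $\norm{u}_{w}^{2}=\sum_{j=1}^{J}p_{j}^{-1}\norm{u_{j}}^{2}$ on $\mathcal{V}$ and compute the one-step conditional expectation of the squared distance to an arbitrary fixed point $x^{\ast}\in\mathrm{Fix}(T)$. Writing the update blockwise, coordinate $j$ is left unchanged unless $j\in\zeta^{k+1}$, an event of conditional probability $p_{j}$ independent of $\mathcal{F}_{k}$; a short computation gives, blockwise,
$$
\mathbb{E}\big[\norm{x^{k+1}_{j}-x^{\ast}_{j}}^{2}\,\big|\,\mathcal{F}_{k}\big]
=\norm{x^{k}_{j}-x^{\ast}_{j}}^{2}
+p_{j}\Big(2\beta_{k}\langle x^{k}_{j}-x^{\ast}_{j},\,T_{j}x^{k}-x^{k}_{j}\rangle+\beta_{k}^{2}\norm{T_{j}x^{k}-x^{k}_{j}}^{2}\Big).
$$
The weights $p_{j}^{-1}$ are chosen precisely so that the factors $p_{j}$ cancel upon summing, yielding
$$
\mathbb{E}\big[\norm{x^{k+1}-x^{\ast}}_{w}^{2}\,\big|\,\mathcal{F}_{k}\big]
=\norm{x^{k}-x^{\ast}}_{w}^{2}
+2\beta_{k}\langle x^{k}-x^{\ast},\,Tx^{k}-x^{k}\rangle+\beta_{k}^{2}\norm{Tx^{k}-x^{k}}^{2},
$$
where the inner product and norm on the right are the unweighted ones on $\mathcal{V}$.

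Next I would exploit the $\alpha$-averagedness of $T$. Expanding $\norm{Tx^{k}-x^{\ast}}^{2}$ and using the characterization $\norm{Tx-Ty}^{2}\le\norm{x-y}^{2}-\frac{1-\alpha}{\alpha}\norm{(I-T)x-(I-T)y}^{2}$ with $y=x^{\ast}$ yields the cross-term bound $2\langle x^{k}-x^{\ast},Tx^{k}-x^{k}\rangle\le-\frac{1}{\alpha}\norm{Tx^{k}-x^{k}}^{2}$. Substituting gives
$$
\mathbb{E}\big[\norm{x^{k+1}-x^{\ast}}_{w}^{2}\,\big|\,\mathcal{F}_{k}\big]
\le\norm{x^{k}-x^{\ast}}_{w}^{2}
-\beta_{k}\Big(\tfrac{1}{\alpha}-\beta_{k}\Big)\norm{Tx^{k}-x^{k}}^{2}.
$$
Because $0<\liminf_{k}\beta_{k}$ and $\limsup_{k}\beta_{k}<1/\alpha$, there is $\eta>0$ with $\beta_{k}(\tfrac1\alpha-\beta_{k})\ge\eta$ for all large $k$, so $\norm{x^{k}-x^{\ast}}_{w}^{2}$ is a nonnegative process decreasing in conditional expectation up to a nonnegative subtracted term. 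The Robbins--Siegmund supermartingale theorem then yields, almost surely and for each fixed $x^{\ast}$, that $\norm{x^{k}-x^{\ast}}_{w}$ converges and that $\sum_{k}\norm{Tx^{k}-x^{k}}^{2}<\infty$; in particular $\norm{Tx^{k}-x^{k}}\to0$ almost surely and $(x^{k})$ is almost surely bounded.

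Finally, I would upgrade subsequential convergence to full almost sure convergence by a pathwise Opial argument. Boundedness produces a cluster point $\bar{x}$ along some subsequence; since $T$ is nonexpansive, hence continuous, and $\norm{Tx^{k}-x^{k}}\to0$, passing to the limit gives $T\bar{x}=\bar{x}$, so every cluster point lies in $\mathrm{Fix}(T)$. The delicate point, and the step I expect to be the main obstacle, is that convergence of $\norm{x^{k}-x^{\ast}}_{w}$ holds only on an $x^{\ast}$-dependent probability-one event. To circumvent this I would fix a countable dense subset of $\mathrm{Fix}(T)$, intersect the corresponding full-measure events with the event $\{\norm{Tx^{k}-x^{k}}\to0\}$, and work on this single probability-one event; a density and continuity argument extends the existence of $\lim_{k}\norm{x^{k}-z}_{w}$ to every $z\in\mathrm{Fix}(T)$, after which the classical Opial lemma forces all cluster points to coincide, giving convergence of the whole sequence to a (random) point of $\mathrm{Fix}(T)$.
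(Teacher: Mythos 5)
Your argument is correct, but note that the paper itself gives no proof of this statement: it is quoted verbatim as Theorem~3 of reference [2] (Bianchi, Hachem and Iutzeler), so there is no internal proof to compare against. What you have written is essentially the standard stochastic quasi-Fej\'er argument used in [2]: the weighted norm $\sum_{j}p_{j}^{-1}\|\cdot\|^{2}$ chosen so the activation probabilities cancel in the conditional expectation, the averagedness inequality $2\langle x^{k}-x^{\ast},Tx^{k}-x^{k}\rangle\leq-\alpha^{-1}\|Tx^{k}-x^{k}\|^{2}$, Robbins--Siegmund, and the countable-dense-subset device to obtain a single full-measure event on which the Opial/Fej\'er argument applies; all steps check out, including your handling of the finitely many early indices where $\beta_{k}(\alpha^{-1}-\beta_{k})$ may fail to be bounded below.
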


In particular, if  $T$ is nonexpansive,  and for all $k$, sequence
$(\beta_{k})_{k\in\mathbb{N}}$ satisfies
$$0<\liminf_{k}\beta_{k}\leq\limsup_{k}\beta_{k}<1.$$
We can know the iterated sequence (2.2) converges to some point in
Fix(T ).

\section{Splitting primal-dual fixed
point algorithm }

When $g=0$, for problem (1.1) Chen et al [1] considered a
primal-dual fixed point algorithm based on the proximity
operator($PDFP^{2}O$) as follows:

$$
\left\{
\begin{array}{l}
v^{k+1}=(I-prox_{\frac{\gamma}{\lambda}h})(D(x^{k}-\gamma\nabla f(x^{k}))+(I-\lambda DD^{T})v^{k}),\\
x^{k+1}=x^{k}-\gamma\nabla f(x^{k})-\lambda D^{T}v^{k+1},
\end{array}
\right.\eqno{(3.1)}
$$
where $0<\gamma<2\beta$, $0<\lambda\leq1/\lambda_{\max}(DD^{T})$,
$\lambda_{\max}(DD^{T})$ is the largest eigenvalue of $DD^{T}$, $I$
is identity operator or unit matrix. \\
The convergence of $PDFP^{2}O$ is guaranteed by the following
theorem.
\begin{thm}([1])
 Suppose $0 <\gamma< 2\beta$ and $0<\lambda\leq1/\lambda_{\max}(DD^{T})$. Let $u_{k} = (v_{k}, x_{k})$ be
the sequence generated by $PDFP^{2}O$. Then the sequence $\{x_{k}\}$
converges to a solution of problem (1.1).
\end{thm}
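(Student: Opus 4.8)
The plan is to recast the recursion (3.1) as a Picard iteration $u^{k+1}=Tu^{k}$ for $u=(v,x)\in\mathcal{Y}\times\mathcal{X}$, where $T$ is exactly the operator of Lemma 2.2, and then to deduce convergence from the averaged (hence Fej\'er-monotone) behaviour of $T$ under the weighted norm $\|\cdot\|_{\lambda}$. Concretely, I would proceed in three stages: (i) show that the fixed points of $T$ are in one-to-one correspondence with solutions of (1.1) (with $g=0$) and that at least one fixed point exists; (ii) upgrade the nonexpansiveness supplied by Lemma 2.2 to $\alpha$-averagedness of $T$ for a suitable $\alpha\in(0,1)$; and (iii) invoke the convergence theory for averaged operators to obtain $u^{k}\to u^{\ast}$, whence $x^{k}\to x^{\ast}$.

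For stage (i) I would read off the fixed-point equations from (3.1): a pair $(v^{\ast},x^{\ast})$ is fixed iff $\gamma\nabla f(x^{\ast})+\lambda D^{T}v^{\ast}=0$ (from the $x$-line) and $v^{\ast}=(I-prox_{\frac{\gamma}{\lambda}h})(D(x^{\ast}-\gamma\nabla f(x^{\ast}))+(I-\lambda DD^{T})v^{\ast})$ (from the $v$-line). Using the characterization $p=prox_{\phi}(z)\Leftrightarrow z-p\in\partial\phi(p)$ together with Moreau's decomposition to handle $I-prox_{\frac{\gamma}{\lambda}h}$, these two equations collapse to the optimality condition $0\in\nabla f(x^{\ast})+D^{T}\partial h(Dx^{\ast})$, which is precisely the first-order condition characterizing a minimizer of $f+h\circ D$. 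Existence of a fixed point then follows from Assumption 2.2: the infimum is attained and the qualification condition $0\in ri(dom\,h-D\,dom\,g)$ validates the subdifferential sum rule, so a minimizer $x^{\ast}$ produces an admissible dual vector $v^{\ast}$.

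Stage (ii) is where the real work lies, since a Picard iteration of a merely nonexpansive map need not converge. The idea is to exhibit $T$ as a combination of averaged pieces. By the Baillon--Haddad theorem the hypothesis that $\nabla f$ is $1/\beta$-Lipschitz makes it $\beta$-cocoercive, so for $0<\gamma<2\beta$ the gradient step $x\mapsto x-\gamma\nabla f(x)$ is $\frac{\gamma}{2\beta}$-averaged; by Lemma 2.1 the map $I-prox_{\frac{\gamma}{\lambda}h}$ is firmly nonexpansive, i.e.\ $\tfrac12$-averaged; and the choice $0<\lambda\leq 1/\lambda_{\max}(DD^{T})$ guarantees $0\preceq I-\lambda DD^{T}\preceq I$, so the linear coupling through $D$ and $D^{T}$ is nonexpansive and does not destroy the averaging. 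I expect the cleanest route to be a direct estimate of $\|Tu-Tu'\|_{\lambda}^{2}$ that produces a strictly negative multiple of $\|(I-T)u-(I-T)u'\|_{\lambda}^{2}$, which is exactly the defining inequality of averagedness; the delicate point is to carry the $\lambda$-weighting and the $D,D^{T}$ cross terms through this computation so that the cocoercivity of $\nabla f$ and the firm nonexpansiveness of $I-prox$ combine with the correct constants. This bookkeeping across the product space is the main obstacle.

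Finally, in stage (iii), once $T$ is $\alpha$-averaged with $Fix(T)\neq\emptyset$, the sequence $\{u^{k}\}$ is Fej\'er monotone with respect to $Fix(T)$ under $\|\cdot\|_{\lambda}$, it is asymptotically regular, and every cluster point lies in $Fix(T)$; this forces $u^{k}\to u^{\ast}$ for some $u^{\ast}=(v^{\ast},x^{\ast})\in Fix(T)$. Equivalently, this is the special case of Lemma 2.4 obtained by always selecting the full index set and taking $\beta_{k}=1<1/\alpha$. Projecting onto the $x$-coordinate gives $x^{k}\to x^{\ast}$, and by stage (i) the limit $x^{\ast}$ solves (1.1), which completes the argument.
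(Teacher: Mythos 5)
The paper itself offers no proof of this theorem: it is quoted verbatim from reference [1] (Chen--Huang--Zhang), so the relevant comparison is with the argument given there. Your three-stage plan is exactly that argument in outline: rewrite (3.1) as $u^{k+1}=Tu^{k}$, identify $\mathrm{Fix}(T)$ with the primal-dual solutions via the subgradient characterization of the proximity operator (your computation of the fixed-point equations is correct: the $x$-line gives $\gamma\nabla f(x^{\ast})+\lambda D^{T}v^{\ast}=0$, which simplifies the argument of $I-prox_{\frac{\gamma}{\lambda}h}$ to $Dx^{\ast}+v^{\ast}$ and yields $v^{\ast}\in\frac{\gamma}{\lambda}\partial h(Dx^{\ast})$, hence $0\in\nabla f(x^{\ast})+D^{T}\partial h(Dx^{\ast})$), and then conclude by Fej\'er monotonicity under $\|\cdot\|_{\lambda}$. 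Stages (i) and (iii) are sound (a minor slip: the randomized Krasnosel'skii--Mann result you invoke is Lemma 2.3 of this paper, not Lemma 2.4, and using it with $\beta_{k}=1$ requires $\alpha<1$, i.e.\ genuine averagedness).

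The genuine gap is that stage (ii), which you correctly identify as ``where the real work lies,'' is announced but never carried out, and it is the entire content of the theorem. Lemma 2.2 gives only nonexpansiveness, and Picard iterates of a merely nonexpansive map need not converge (a rotation of the plane is the standard counterexample), so nothing is proved until the quantitative estimate is actually produced. What [1] does at this point is not to exhibit a single averagedness constant $\alpha$ for $T$, but to expand $\|u^{k+1}-u^{\ast}\|_{\lambda}^{2}$ directly and use the $\beta$-cocoercivity of $\nabla f$ (Baillon--Haddad), the firm nonexpansiveness of $I-prox_{\frac{\gamma}{\lambda}h}$ (Lemma 2.1), and the bound $0\preceq I-\lambda DD^{T}\preceq I$ to obtain
$$\|u^{k+1}-u^{\ast}\|_{\lambda}^{2}\leq\|u^{k}-u^{\ast}\|_{\lambda}^{2}-c_{1}\|r_{1}^{k}\|^{2}-c_{2}\|r_{2}^{k}\|^{2}$$
with $c_{1},c_{2}>0$ precisely when $0<\gamma<2\beta$ and $\lambda\lambda_{\max}(DD^{T})\leq1$; the vanishing residuals, boundedness, and the demiclosedness of $I-T$ in finite dimension then force convergence of the whole sequence to a fixed point. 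This is morally your averagedness claim, but deriving the Fej\'er inequality with explicit residuals is both what makes the proof go through and what you have deferred; until the cross terms in $D$, $D^{T}$ and the $\lambda$-weighting are tracked through that expansion, the proof is incomplete rather than wrong.
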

Similar to the primal-dual fixed point algorithm based on proximity
operator(PDFP$^{2}$O), we proposed an algorithm called SPDFP$^{2}$O
to solve (1.1) as follows:
\begin{algorithm}[H]
\caption{Splitting primal-dual fixed points algorithm based on
proximity operator(SPDFP$^{2}$O).}
\begin{algorithmic}\label{1}
\STATE Initialization: Choose $x^{0}, y^{0}\in \mathcal{X}$,
$v^{0}\in
\mathcal{Y}$, $0<\lambda\leq1/(\lambda_{\max}(DD^{T})+1)$, $0<\gamma<2\beta$.\\
Iterations ($k\geq0$): Update $x^{k}$, $v^{k}$, $x^{k+\frac{1}{2}}$
as follows
$$
\left\{
\begin{array}{l}
x^{k+\frac{1}{2}}=x^{k}-\gamma_\nabla f(x^{k}),\\
v^{k+1}=(I-prox_{\frac{\gamma}{\lambda}h})(Dx^{k+\frac{1}{2}}+(I-\lambda DD^{T})v^{k}-\lambda Dy^{k}),\\
y^{k+1}=(I-prox_{\frac{\gamma}{\lambda}g})(x_{k+\frac{1}{2}}+(I-\lambda )y^{k}-\lambda D^{T}v^{k}),\\
x^{k+1}=x_{k+\frac{1}{2}}-\lambda D^{T}v^{k+1}-\lambda y^{k+1}.
\end{array}
\right.
$$
end for
\end{algorithmic}
\end{algorithm}
\begin{thm}
 Suppose $0 <\gamma< 2\beta$ and $0<\lambda\leq1/(\lambda_{\max}(DD^{T})+1)$. Let $u_{k} = (v_{k}, x_{k})$ be
the sequence generated by $SPDFP^{2}O$. Then the sequence
$\{x_{k}\}$ converges to a solution of problem (1.1).
\end{thm}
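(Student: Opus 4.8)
The plan is to recast Algorithm~1 as the Picard iteration $u^{k+1}=Tu^{k}$ of a single operator $T$ acting on the product space $\mathcal{U}:=\mathcal{Y}\times\mathcal{X}\times\mathcal{X}$, whose points we write $u=(v,y,x)$, and to equip $\mathcal{U}$ with the weighted norm $\|u\|_{\lambda}^{2}=\|x\|_{2}^{2}+\lambda\|v\|_{2}^{2}+\lambda\|y\|_{2}^{2}$ extending the norm of Section~2. The proof then splits into three parts: (a) the fixed points of $T$ are exactly the solutions of (1.1); (b) $T$ is averaged, hence nonexpansive, for $\|\cdot\|_{\lambda}$ under the stated ranges of $\gamma$ and $\lambda$; and (c) the iteration converges to a fixed point, whence $\{x^{k}\}$ converges to a minimizer.

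First I would establish (a). Let $u^{\ast}=(v^{\ast},y^{\ast},x^{\ast})$ be a fixed point, and write $x^{\ast+\frac12}=x^{\ast}-\gamma\nabla f(x^{\ast})$. The last line of the iteration gives $x^{\ast+\frac12}-x^{\ast}=\lambda D^{T}v^{\ast}+\lambda y^{\ast}$, hence $\gamma\nabla f(x^{\ast})+\lambda D^{T}v^{\ast}+\lambda y^{\ast}=0$. Using the characterization $p=prox_{\psi}(z)\Leftrightarrow z-p\in\partial\psi(p)$ together with the defining relations for $v^{\ast}$ and $y^{\ast}$ through $I-prox$, and substituting the fixed-point expression for $x^{\ast}$, I would identify the arguments of the two proximity operators as $Dx^{\ast}$ and $x^{\ast}$ respectively, obtaining $\tfrac{\lambda}{\gamma}v^{\ast}\in\partial h(Dx^{\ast})$ and $\tfrac{\lambda}{\gamma}y^{\ast}\in\partial g(x^{\ast})$. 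Combined with the previous identity this yields $0\in\nabla f(x^{\ast})+\partial g(x^{\ast})+D^{T}\partial h(Dx^{\ast})$. Under the qualification condition of Assumption~2.2 the subdifferential sum rule makes this precisely the first-order optimality condition for (1.1), so $x^{\ast}$ is a solution; all steps are reversible, giving the converse, and Assumption~2.2 also guarantees $\mathrm{Fix}(T)\neq\emptyset$.

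The heart of the argument is (b). Taking two inputs $u,\bar u$ and their images, I would bound $\|Tu-T\bar u\|_{\lambda}^{2}$ above by $\|u-\bar u\|_{\lambda}^{2}$ minus a sum of nonnegative terms. Three ingredients enter. The gradient block is handled by the Baillon--Haddad property: since $f$ is convex with $1/\beta$-Lipschitz gradient (Assumption~2.1), $\nabla f$ is $\beta$-cocoercive, so for $0<\gamma<2\beta$ one has $\|(I-\gamma\nabla f)x-(I-\gamma\nabla f)\bar x\|_{2}^{2}\le\|x-\bar x\|_{2}^{2}-\gamma(2\beta-\gamma)\|\nabla f(x)-\nabla f(\bar x)\|_{2}^{2}$. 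The two proximal blocks are handled by the firm nonexpansiveness of $I-prox_{\frac{\gamma}{\lambda}h}$ and $I-prox_{\frac{\gamma}{\lambda}g}$ (Lemma~2.1 together with Definition~2.3(ii)). The linear coupling between $v$, $y$ and $x$ is governed by $Q=I-\lambda\left(\begin{smallmatrix}DD^{T}&D\\ D^{T}&I\end{smallmatrix}\right)$: since the stacked operator $\left(\begin{smallmatrix}D\\ I\end{smallmatrix}\right)$ has Gram matrix with largest eigenvalue $\lambda_{\max}(DD^{T})+1$, the bound $0<\lambda\le 1/(\lambda_{\max}(DD^{T})+1)$ is exactly what forces $0\preceq Q\preceq I$ and makes the coupling nonexpansive. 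Assembling these three estimates in the $\|\cdot\|_{\lambda}$ geometry produces the averagedness inequality for $T$.

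Finally, for (c), $T$ is $\alpha$-averaged with $\alpha\in(0,1)$ and $\mathrm{Fix}(T)\neq\emptyset$; Algorithm~1 is the special instance of the iteration (2.2) in which every coordinate is updated ($\zeta^{k}\equiv\mathcal{J}$) with $\beta_{k}\equiv1<1/\alpha$, so Lemma~2.3 yields $u^{k}\to u^{\ast}\in\mathrm{Fix}(T)$, and by part (a) the component $x^{k}\to x^{\ast}$ solves (1.1); equivalently one may invoke directly the classical convergence of Picard iterations of averaged operators in finite dimension. I expect the main obstacle to be part (b): arranging the cross terms between the $v$- and $y$-updates so that the subtracted quantity is genuinely nonnegative, and checking that this is what singles out the thresholds $\gamma<2\beta$ and $\lambda\le1/(\lambda_{\max}(DD^{T})+1)$. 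The appearance of the ``$+1$'' (absent in PDFP$^{2}$O) is the signature of the extra splitting variable $y$ associated with $g$, and verifying the corresponding positive semidefiniteness of $Q$ is the delicate computational point.
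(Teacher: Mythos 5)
Your outline is sound, but it takes a much more laborious route than the paper. The paper's entire proof is a reduction: it stacks the two nonsmooth terms by setting $\tilde{D}=(D,I)^{T}$ and $\tilde{h}(v,y)=h(v)+g(y)$, notes that $(\tilde h\circ\tilde D)(x)=h(Dx)+g(x)$ so that (1.1) becomes the two-term problem $\min f+(\tilde h\circ\tilde D)$ already solved by PDFP$^{2}$O, and then simply invokes Chen et al.'s convergence theorem (Theorem 3.1) with $\lambda_{\max}(\tilde D\tilde D^{T})=\lambda_{\max}(DD^{T})+1$; the separability of $\tilde h$ is what splits the single dual update into the $v$- and $y$-updates of Algorithm 1. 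Your coupling matrix $Q=I-\lambda\left(\begin{smallmatrix}DD^{T}&D\\ D^{T}&I\end{smallmatrix}\right)$ is exactly $I-\lambda\tilde D\tilde D^{T}$, and your weighted norm is the paper's $\|\cdot\|_{\lambda}$ on the product space, so you have in effect rediscovered the same stacking --- but you then propose to re-derive the whole PDFP$^{2}$O analysis (fixed-point characterization, averagedness via Baillon--Haddad and firm nonexpansiveness, Krasnosel'skii--Mann convergence) rather than cite it. That buys a self-contained proof and makes transparent where the threshold $\lambda\le 1/(\lambda_{\max}(DD^{T})+1)$ comes from, at the cost of redoing the delicate part (b); note in particular that mere nonexpansiveness --- which is all the paper's Lemma 2.2 records, and which under Lemma 2.3 would require relaxation parameters bounded away from $1$ --- does not make the unrelaxed Picard iteration converge, so your claim that $T$ is genuinely $\alpha$-averaged (or at least Fej\'er monotone with asymptotic regularity, which is what the argument in [1] actually verifies) is the step that must be carried out in full. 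Both arguments are correct; the paper's is shorter because it outsources your (a)--(c) to the cited theorem.
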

\begin{proof}
By setting $\tilde{D}=(D,I)^{T}$,
 $\tilde{h}(v,y)=h(v)+g(y), \forall (v,y)\in
 \mathcal{Y}\times\mathcal{X}$, we have
 $(\tilde{h}\circ\tilde{D})(x)=h(Dx)+g(x), \forall x\in\mathcal{X}$.
So, the problem (1.1) can be formulated as  follows:
$$\min_{x\in\mathcal{X}} f(x)+(\tilde{h}\circ\tilde{D})(x),\eqno{(3.2)}$$
Based on the reference[1], we can obtain the following iterative
sequence:
$$
\left\{
\begin{array}{l}
\tilde{v}^{k+1}=(I-prox_{\frac{\gamma}{\lambda}\tilde{h}})(\tilde{D}(x^{k}-\gamma\nabla f(x^{k}))+(I-\lambda \tilde{D}\tilde{D}^{T})\tilde{v}^{k}),\\
x^{k+1}=x^{k}-\gamma\nabla f(x^{k})-\lambda
\tilde{D}^{T}\tilde{v}^{k+1},
\end{array}
\right.\eqno{(3.3)}
$$
where $0<\gamma<2\beta$,
$0<\lambda\leq1/(\lambda_{\max}\tilde{D}\tilde{D}^{T})=1/(\lambda_{\max}(DD^{T})+1)$,
$\tilde{v}_{k}=(v_{k},y_{k})^{T}$. Since the function $\tilde{h}$ is
separable with the variables $v,y$, then the formula (3.3) is
equivalent to

$$
\left\{
\begin{array}{l}
v^{k+1}=(I-prox_{\frac{\gamma}{\lambda}h})(D(x^{k}-\gamma\nabla f(x^{k}))+(I-\lambda DD^{T})v^{k}-\lambda Dy^{k}),(3.4a)\\
y^{k+1}=(I-prox_{\frac{\gamma}{\lambda}g})((x^{k}-\gamma\nabla f(x^{k}))+(I-\lambda )y^{k}-\lambda D^{T}v^{k}),~~~~~~~~(3.4b)\\
x^{k+1}=x^{k}-\gamma\nabla f(x^{k})-\lambda D^{T}v^{k+1}-\lambda
y^{k+1}.~~~~~~~~~~~~~~~~~~~~~~~~~~~~~~~~(3.4c)
\end{array}
\right.
$$

From the formula (3.3), we can easy obtain algorithm 1. So, the
above algorithm is equivalent to apply directly PDFP$^{2}$O of [1]
to solve (3.2). According to the Theorem 3.1, we can obtain the
convergence of Algorithm 1(SPDFP$^{2}$O).
\end{proof}
Furthermore,  we can analyze the convergence rate of Algorithm
1(SPDFP$^{2}$O).\\
 Let $u^{k}=(v^{k},y^{k},x^{k})$ be a sequence
obtained by algorithm SPDFP$^{2}$O. Then the sequence $u_{k}$ must
converge to a point $u^{\ast}=(v^{\ast},y^{\ast},x^{\ast})$, with
$x^{\ast}$ is a solution of problem (1.1), by the Theorem 3.7 of
[1], we know the following estimate
$$\|x^{k}-x^{\ast}\|\leq\frac{c\theta^{k}}{1-\theta},$$
where $c=\|u^{1}-u^{0}\|_{\lambda}$,
$\eta=\max\{\eta_{1},\eta_{1}\}$, with $\eta_{1}$ and $\eta_{2}$
given in condition 3.1 of [1].

From Lemma 2.2, the SPDFP$^{2}$O iterates are generated by the
action of a nonexpansive operator. Lemma 2.3 shows then that a
stochastic coordinate descent version of the  SPDFP$^{2}$O converges
towards a primal-dual point. This result will be exploited in two
directions: first, we describe a stochastic minibatch algorithm,
where a large dataset is randomly split into smaller chunks. Second,
we develop an asynchronous version of the  SPDFP$^{2}$O in the
context where it is distributed on a graph.

\section{Application to stochastic approximation}
\subsection{Problem setting}
Given an integer $N > 1$, consider the problem of minimizing a sum
of composite functions

$$\inf_{x\in\mathcal{X}}\sum_{n=1}^{N}(f_{n}(x)+g_{n}(x)),\eqno{(4.1)}$$
where we make the following assumption:

\begin{ass}
For each $n = 1, ...,N$,\\
 (1)$f_{n}$ is a convex differentiable function on $\mathcal{X}$, and its
gradient $\nabla f_{n}$ is  $1/\beta$-Lipschitz continuous on
$\mathcal{X}$ for some $\beta\in(0,+\infty)$;\\
 (2)$g_{n}\in \Gamma_{0}(\mathcal{X})$;\\
 (3) The infimum of Problem (4.1) is attained;\\
 (4) $\cap_{n=1}^{N}ridom g_{n}\neq0.$
\end{ass}

This problem arises for instance in large-scale learning
applications where the learning set is too large to be handled as a
single block. Stochastic minibatch approaches consist in splitting
the data set into $N$ chunks and to process each chunk in some
order, one at a time. The quantity $f_{n}(x) + g_{n}(x)$ measures
the inadequacy between the model (represented by parameter $x$) and
the $n$-th chunk of data. Typically, $f_{n}$ stands for a data
fitting term whereas $g_{n}$ is a regularization term which
penalizes the occurrence of erratic solutions. As an example, the
case where $f_{n}$ is quadratic and $g_{n}$ is the $l_{1}$-norm
reduces to the popular LASSO problem [5]. In particular,  it also
useful to recover sparse signal.

\subsection{ Instantiating the SPDFP$^{2}$O}
We regard our stochastic minibatch algorithm as an instance of the
SPDFP$^{2}$O coupled with a randomized coordinate descent. In order
to end that ,we rephrase problem (4.1) as
$$\inf_{x\in\mathcal{X}^{N}}\sum_{n=1}^{N}(f_{n}(x)+g_{n}(x))+\iota_{\mathcal{C}}(x),\eqno{(4.2)}$$
where the notation $x_{n}$ represents the $n$-th component of any $x
\in \mathcal{X}^{N}$, $\mathcal{C}$ is the space of vectors $x \in
\mathcal{X}^{N}$ such that $x_{1} = \cdots = x_{N}$. On the space
$\mathcal{X}^{N}$, we set $f(x) = \sum_{ n} f_{n}(x_{n})$, $g(x) =
\sum_{ n} g_{n}(x_{n})$, $h(x) = \iota_{\mathcal{C}}$ and $D =
I_{\mathcal{X}^{N}}$ the identity matrix. problem (4.2) is
equivalent to
$$\min_{x\in\mathcal{X}^{N}} f(x)+g(x)+ (h\circ D)(x).\eqno{(4.3)}$$
We define the natural scalar product on $\mathcal{X}^{N}$ as
$\langle x,y\rangle=\sum_{n=1}^{N}\langle x_{n},y_{n}\rangle$.
Applying the SPDFP$^{2}$O to solve problem (4.3) leads to the
following iterative scheme:
\begin{align*}
&z^{k+1}=proj_{\mathcal{C}}(x^{k}-\gamma\nabla f(x^{k})+(1-\lambda)v^{k}-\lambda y^{k}), \\
&v^{k+1}_{n}=x^{k}_{n}-\gamma\nabla f_{n}(x^{k}_{n})+(1-\lambda)v^{k}_{n}-\lambda y^{k}_{n}-z^{k+1}_{n}, \\
&y^{k+1}_{n}=(I-prox_{\frac{\gamma}{\lambda}g_{n}})(x^{k}_{n}-\gamma\nabla f_{n}(x^{k}_{n})+(1-\lambda)y^{k}_{n}-\lambda v^{k}_{n}),\\
&x^{k+1}_{n}=x^{k}_{n}-\gamma\nabla f_{n}(x^{k}_{n})-\lambda
v^{k+1}_{n}-\lambda y^{k+1}_{n},
\end{align*}
where $proj_{\mathcal{C}}$ is the orthogonal projection onto
$\mathcal{C}$. Observe that for any $x \in \mathcal{X}^{N}$,
$proj_{\mathcal{C}}(x)$ is equivalent to $(\bar{x}, \cdots ,
\bar{x})$ where $\bar{x}$ is the average of vector $x$, that is
$\bar{x}=N^{-1}\sum_{n}x_{n}$. Consequently, the components of
$z^{k+1}$ are equal and coincide with $\bar{x}^{k}-\gamma\nabla
\bar{f}(\bar{x}^{k})+(1-\lambda)\bar{v}^{k}-\lambda \bar{y}^{k}$
where $\bar{f}$, $\bar{x}^{k}$, $\bar{v}^{k}$ and $\bar{y}^{k}$ are
the averages of $f$, $x^{k}$ $v^{k}$ and $y^{k}$ respectively. By
inspecting the $v^{k}$ $n$-update equation above, we notice that the
latter equality simplifies even further by noting that
$\bar{v}^{k+1} = 0$ or, equivalently, $\bar{v}^{k} = 0$ for all
$k\geq 1$ if the algorithm is started with $\bar{v}^{0} = 0$.
Finally, for any $n$ and $k\geq 1$, the above iterations reduce to
\begin{align*}
&\bar{x}^{k}-\gamma\nabla \bar{f}(\bar{x}^{k})-\lambda \bar{y}^{k}=\frac{1}{N}\sum_{n=1}^{N} (x^{k}_{n}-\gamma\nabla \bar{f}_{n}(x^{k}_{n})-\lambda y^{k}_{n}),\\
&v^{k+1}_{n}=x^{k}_{n}-\gamma\nabla f_{n}(x^{k}_{n})+(1-\lambda)v^{k}_{n}-\lambda y^{k}_{n}-(\bar{x}^{k}-\gamma\nabla \bar{f}(\bar{x}^{k})-\lambda \bar{y}^{k}),\\
&y^{k+1}_{n}=(I-prox_{\frac{\gamma}{\lambda}g_{n}})(x^{k}_{n}-\gamma\nabla f_{n}(x^{k}_{n})+(1-\lambda)y^{k}_{n}-\lambda v^{k}_{n}),\\
&x^{k+1}_{n}=x^{k}_{n}-\gamma\nabla f_{n}(x^{k}_{n})-\lambda
v^{k+1}_{n}-\lambda y^{k+1}_{n}.
\end{align*}
These iterations can be written more compactly as
\begin{algorithm}[H]
\caption{Minibatch SPDFP$^{2}$O.}
\begin{algorithmic}\label{1}
\STATE Initialization: Choose $x^{0}, y^{0}\in \mathcal{X}$,
$v^{0}\in
\mathcal{Y}$, s.t. $\sum_{n}v^{0}_{n}=0$ ,  $0<\lambda\leq1/2$,  $0<\gamma<2\beta$.\\
Do
$$
\begin{array}{l}
\bullet ~~\bar{x}^{k}-\gamma\nabla \bar{f}(\bar{x}^{k})-\lambda \bar{y}^{k}=\frac{1}{N}\sum_{n=1}^{N} (x^{k}_{n}-\gamma\nabla f_{n}(x^{k}_{n})-\lambda y^{k}_{n}),\\
\bullet ~~For~~~  batches~~~  n = 1, \cdots ,N,~~~  do\\
~~~~v^{k+1}_{n}=x^{k}_{n}-\gamma\nabla f_{n}(x^{k}_{n})+(1-\lambda)v^{k}_{n}-\lambda y^{k}_{n}-(\bar{x}^{k}-\gamma\nabla \bar{f}(\bar{x}^{k})-\lambda \bar{y}^{k}),\\
~~~~y^{k+1}_{n}=(I-prox_{\frac{\gamma}{\lambda}g_{n}})(x^{k}_{n}-\gamma\nabla f_{n}(x^{k}_{n})+(1-\lambda)y^{k}_{n}-\lambda v^{k}_{n}),\\
~~~~x^{k+1}_{n}=x^{k}_{n}-\gamma\nabla f_{n}(x^{k}_{n})-\lambda
v^{k+1}_{n}-\lambda y^{k+1}_{n}.\\
\bullet ~~Increment~ k.
\end{array}\eqno{(4.4)}
$$

\end{algorithmic}
\end{algorithm}
The following result is a straightforward consequence of Theorem
3.2.
\begin{thm}
 Suppose $0 <\gamma< 2\beta$ and $0<\lambda\leq1/2$, and let Assumption 4.1 hold true.   Then for any
initial point $( v^{0}, y^{0}, x^{0})$ such that $\bar{v}^{0} = 0$,
the sequence $\{\bar{x}^{k}\}$ generated by Minibatch SPDFP$^{2}$O
converges to a solution of problem (4.3).
\end{thm}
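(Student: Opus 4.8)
The plan is to recognize problem (4.3) as a particular instance of problem (1.1) posed on the product space $\mathcal{X}^{N}$, so that Theorem 3.2 applies verbatim, and then to check that the Minibatch SPDFP$^{2}$O recursion (4.4) is nothing but Algorithm 1 run on that instance, rewritten after exploiting the invariant $\bar{v}^{k}=0$. Concretely, on $\mathcal{X}^{N}$ one uses the dictionary $f(x)=\sum_{n}f_{n}(x_{n})$, $g(x)=\sum_{n}g_{n}(x_{n})$, $h=\iota_{\mathcal{C}}$ and $D=I_{\mathcal{X}^{N}}$, where $\mathcal{C}=\{x\in\mathcal{X}^{N}:x_{1}=\cdots=x_{N}\}$ is the consensus subspace; with this identification the theorem reduces to a direct appeal to the convergence already established in Theorem 3.2.

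First I would verify that this instance meets the hypotheses of Theorem 3.2. Since $D=I$ is trivially injective, Assumption 2.1(1) holds; because each $\nabla f_{n}$ is $1/\beta$-Lipschitz by Assumption 4.1(1) and $f$ is a separable sum, $\nabla f$ is again $1/\beta$-Lipschitz on $\mathcal{X}^{N}$, giving Assumption 2.1(2), while $g$ and $h=\iota_{\mathcal{C}}$ lie in $\Gamma_{0}(\mathcal{X}^{N})$ by Assumption 4.1(2). Moreover $DD^{T}=I$ forces $\lambda_{\max}(DD^{T})=1$, so the stepsize condition $0<\lambda\le 1/(\lambda_{\max}(DD^{T})+1)$ of Algorithm 1 becomes exactly $0<\lambda\le 1/2$, and $0<\gamma<2\beta$ is unchanged. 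The one substantive check is the qualification condition $0\in ri(dom\,h-D\,dom\,g)$ of Assumption 2.2: here $dom\,h=\mathcal{C}$ (a subspace, so $ri(\mathcal{C})=\mathcal{C}$) and $dom\,g=\prod_{n}dom\,g_{n}$ with $ri(dom\,g)=\prod_{n}ri(dom\,g_{n})$, whence using $ri(\mathcal{C}-dom\,g)=\mathcal{C}-\prod_{n}ri(dom\,g_{n})$ the condition is equivalent to $\mathcal{C}\cap\prod_{n}ri(dom\,g_{n})\neq\emptyset$. By Assumption 4.1(4) there is a common point $\xi\in\cap_{n}ri(dom\,g_{n})$, and then the consensus vector $(\xi,\dots,\xi)$ lies in that intersection; together with Assumption 4.1(3) this secures Assumption 2.2, and Theorem 3.2 yields that the full iterate $x^{k}\in\mathcal{X}^{N}$ converges to a minimizer $x^{\star}$ of (4.3).

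It then remains to match (4.4) with the instantiated SPDFP$^{2}$O and to transfer the conclusion to the averages. The key simplification is that $prox_{\frac{\gamma}{\lambda}h}=prox_{\iota_{\mathcal{C}}}=proj_{\mathcal{C}}$ is the coordinatewise averaging map, so the dual update reads $v^{k+1}=(I-proj_{\mathcal{C}})(\cdot)\in\mathcal{C}^{\perp}$; since every vector of $\mathcal{C}^{\perp}$ has zero coordinate-average, this gives the invariant $\bar{v}^{k}=0$ for all $k\ge 1$, consistent at $k=0$ under the standing hypothesis $\bar{v}^{0}=0$. Substituting $\bar{v}^{k}=0$ deletes the $(1-\lambda)\bar{v}^{k}$ term from the averaged update and produces precisely the three lines of (4.4). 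Finally, the presence of $\iota_{\mathcal{C}}$ forces every minimizer of (4.3) to be a consensus vector $x^{\star}=(\bar{x}^{\star},\dots,\bar{x}^{\star})$ with $\bar{x}^{\star}$ a solution of (4.1), so convergence of $x^{k}$ to $x^{\star}$ gives in particular $\bar{x}^{k}\to\bar{x}^{\star}$, which is the claimed conclusion.

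The hard part will not be the invocation of Theorem 3.2, which is essentially bookkeeping once the dictionary is fixed, but rather the two points requiring genuine care: (i) the qualification condition, where one must pass correctly between the relative interior of a product and the product of relative interiors and exploit Assumption 4.1(4); and (ii) the rigorous justification of the invariant $\bar{v}^{k}=0$ and of the consensus structure of the minimizers, since it is this invariant alone that legitimizes rewriting the SPDFP$^{2}$O recursion in the compact minibatch form (4.4). These are the crux of the argument; the remaining verifications are routine.
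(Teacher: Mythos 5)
Your proposal is correct and follows essentially the same route as the paper: Section 4.2 of the paper performs exactly this instantiation of SPDFP$^{2}$O with $f(x)=\sum_n f_n(x_n)$, $g(x)=\sum_n g_n(x_n)$, $h=\iota_{\mathcal{C}}$, $D=I_{\mathcal{X}^N}$ (so $\lambda_{\max}(DD^T)+1=2$), derives the invariant $\bar{v}^k=0$ to obtain the compact form (4.4), and then states the theorem as a direct consequence of Theorem 3.2. Your write-up is in fact more careful than the paper's, which omits the verification of the qualification condition via Assumption 4.1(4) that you supply.
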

At each step $k$, the iterations given above involve the whole set
of functions $f_{n}, g_{n} (n = 1, \cdots,N)$. Our aim is now to
propose an algorithm which involves a single couple of functions
$(f_{n}, g_{n})$ per iteration.
\subsection{ A stochastic minibatch splitting primal-dual fixed
point algorithm}
We are now in position to state the main algorithm
of this section. The proposed stochastic minibatch splitting
primal-dual fixed point algorithm(SMSPDFP$^{2}$O) is obtained upon
applying the randomized coordinate descent on the minibatch
SPDFP$^{2}$O:

\begin{algorithm}[H]
\caption{SMSPDFP$^{2}$O.}
\begin{algorithmic}\label{1}
\STATE Initialization: Choose $x^{0}, y^{0}\in \mathcal{X}$,
$v^{0}\in
\mathcal{Y}$,  $0<\lambda\leq1/2$,  $0<\gamma<2\beta$.\\
Do
$$
\begin{array}{l}
\bullet ~~Define ~~ \bar{x}^{k}-\gamma\nabla \bar{f}(\bar{x}^{k})-\lambda \bar{y}^{k}=\frac{1}{N}\sum_{n=1}^{N} (x^{k}_{n}-\gamma\nabla f_{n}(x^{k}_{n})-\lambda y^{k}_{n}),\\
 ~~~~ ~~~~~~~~~~~\bar{v}^{k}=\frac{1}{N}\sum_{n=1}^{N}v^{k}_{n},\\
\bullet ~~Pick ~up ~the ~value ~of \zeta^{k+1}, \\
\bullet ~~For~  batch~n=\zeta^{k+1},~set\\
~~~~v^{k+1}_{n}=x^{k}_{n}-\gamma\nabla f_{n}(x^{k}_{n})+(1-\lambda)v^{k}_{n}-\lambda y^{k}_{n}-(\bar{x}^{k}-\gamma\nabla \bar{f}(\bar{x}^{k})-(1-\lambda)\bar{v}^{k}-\lambda \bar{y}^{k}),\\
~~~~~~~~~~~~~~~~~~~~~~~~~~~~~~~~~~~~~~~~~~~~~~~~~~~~~~~~~~~~~~~~~~~~~~~~~~~~~~~~~~~~~~~~~~~~~~~~~~~~(4.5a)\\
~~~~y^{k+1}_{n}=(I-prox_{\frac{\gamma}{\lambda}g_{n}})(x^{k}_{n}-\gamma\nabla f_{n}(x^{k}_{n})+(1-\lambda)y^{k}_{n}-\lambda v^{k}_{n}),~~~~~~~~~~~~~~~~~~~~~(4.5b)\\
~~~~x^{k+1}_{n}=x^{k}_{n}-\gamma\nabla f_{n}(x^{k}_{n})-\lambda
v^{k+1}_{n}-\lambda y^{k+1}_{n}.~~~~~~~~~~~~~~~~~~~~~~~~~~~~~~~~~~~~~~~~~~~~(4.5c)\\
\bullet ~~For~ all ~batches ~n\neq\zeta^{k+1},~~ v^{k+1}_{n}=v^{k}_{n}, y^{k+1}_{n}=y^{k}_{n}, x^{k+1}_{n}=x^{k}_{n}.\\
 \bullet ~~Increment~ k.
\end{array}
$$

\end{algorithmic}
\end{algorithm}

\begin{ass}
The random sequence $(\zeta^{k})_{k\in\mathbb{N}^{\ast}}$ is i.i.d.
and satisfies $\mathbb{P}[\zeta^{1} = n] > 0$ for all $n = 1,
...,N$.
\end{ass}

\begin{thm}
 Suppose $0 <\gamma< 2\beta$ and $0<\lambda\leq1/2$, and let Assumption 4.1 and 4.2 hold true.   Then for any initial point
$( v^{0}, y^{0}, x^{0})$ , the sequence $\{\bar{x}^{k}\}$ generated
by SMSPDFP$^{2}$O converges to a solution of problem (4.3).
\end{thm}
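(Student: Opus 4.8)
The plan is to recognize Algorithm 3 (SMSPDFP$^2$O) as a randomized coordinate-descent instance of the nonexpansive fixed-point iteration that drives the Minibatch SPDFP$^2$O, and then to invoke Lemma 2.3. First I would isolate the operator $T$ governing the minibatch scheme: it is exactly the primal–dual map (3.3)–(3.4) instantiated with the data of Section 4.2 (namely $f(x)=\sum_n f_n(x_n)$, $g(x)=\sum_n g_n(x_n)$, $h=\iota_{\mathcal{C}}$, $D=I$), and its per-batch expression is the system (4.5). Writing $u=(v,y,x)$, this $T$ acts on the product space $\mathcal{V}=\mathcal{V}_1\times\cdots\times\mathcal{V}_N$, where the block $\mathcal{V}_n$ collects the batch-$n$ variables $(v_n,y_n,x_n)$. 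The decisive structural point is that the weighted norm splits across batches, $\|u\|_\lambda^2=\sum_{n=1}^N\big(\|x_n\|^2+\lambda\|v_n\|^2+\lambda\|y_n\|^2\big)$, so $(\mathcal{V},\|\cdot\|_\lambda)$ is precisely the product Euclidean space demanded by Definition 2.4, each $\mathcal{V}_n$ carrying the corresponding weighted inner product. By Lemma 2.2, $T$ is nonexpansive in this norm.

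Next I would check the remaining hypotheses of Lemma 2.3. The coordinate operator $\hat{T}^{\kappa}$, which updates only the blocks in $\kappa$ and freezes the rest, is exactly the rule of Algorithm 3: when $n=\zeta^{k+1}$ the full update (4.5a)–(4.5c) is applied, and for $n\neq\zeta^{k+1}$ the variables are carried over unchanged. Hence SMSPDFP$^2$O coincides with the iteration (2.2) run with step $\beta_k\equiv1$ and selection $\kappa=\set{\zeta^{k+1}}$ a single batch. Assumption 4.2 provides $\mathbb{P}[\zeta^1=n]>0$ for every $n$, which gives Condition (2.1) immediately: each index $n$ lies in the singleton $\set{n}$, drawn with positive probability. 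Finally, Assumption 4.1 ensures that (4.3) has a minimizer, and the correspondence between minimizers and primal–dual fixed points used in the proof of Theorem 3.2 shows $\mathrm{Fix}(T)\neq\emptyset$.

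The step requiring the most care, and the main obstacle, is that Lemma 2.3 permits the undamped step $\beta_k=1$ only when $T$ is $\alpha$-averaged with $\alpha<1$, since the admissible range is $\limsup_k\beta_k<1/\alpha$; the bare nonexpansiveness of Lemma 2.2 does not suffice. I would therefore upgrade Lemma 2.2 to an averagedness statement: in the $\|\cdot\|_\lambda$ metric the PDFP$^2$O operator is a composition of the gradient-descent map $I-\gamma\nabla f$, which is averaged for $0<\gamma<2\beta$, with the firmly nonexpansive operators $I-\mathrm{prox}_{(\gamma/\lambda)\tilde{h}}$ furnished by Lemma 2.1 and the linear contraction $I-\lambda\tilde{D}\tilde{D}^T$ controlled by $0<\lambda\le 1/(\lambda_{\max}(DD^T)+1)$. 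The standard composition rule for averaged operators then yields some $\alpha\in(0,1)$, so that $\beta_k\equiv1<1/\alpha$ falls within the admissible range. Making this constant explicit and confirming it stays strictly below one for all admissible $\gamma,\lambda$ is the technical heart of the argument.

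With these pieces in place the conclusion follows. Lemma 2.3 gives that, almost surely, the full iterate $u^k=(v^k,y^k,x^k)$ converges to some $u^\ast=(v^\ast,y^\ast,x^\ast)\in\mathrm{Fix}(T)$. Convergence of $u^k$ entails convergence of the block average, $\bar{x}^k\to\bar{x}^\ast$, and since $h=\iota_{\mathcal{C}}$ forces the fixed point onto the consensus set ($x^\ast\in\mathcal{C}$, i.e. $x_1^\ast=\cdots=x_N^\ast=\bar{x}^\ast$), the fixed-point/minimizer correspondence identifies $\bar{x}^\ast$ as a solution of (4.3). This establishes the claim, with the averagedness upgrade in the third paragraph being the only nonroutine ingredient.
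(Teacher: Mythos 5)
Your proposal follows the same architecture as the paper's own proof: instantiate the operator $T$ of (3.3)--(3.4) on $\mathcal{X}^{N}$ with $f(x)=\sum_n f_n(x_n)$, $g(x)=\sum_n g_n(x_n)$, $h=\iota_{\mathcal{C}}$ and $D=I$; observe that the product structure $\mathcal{V}=\mathcal{S}_1(\mathcal{V})\times\cdots\times\mathcal{S}_N(\mathcal{V})$ and the selection of the batch $\zeta^{k+1}$ realize exactly the coordinate operator $\hat{T}^{\kappa}$ of Definition 2.4; check Condition (2.1) from Assumption 4.2 and $\mathrm{Fix}(T)\neq\emptyset$ from Assumption 4.1; and conclude by Lemma 2.3, verifying that (4.5a)--(4.5c) are the components of $T$ on the selected block. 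Where you genuinely depart from the paper is your third paragraph: you note that the iteration uses the undamped step $\beta_k\equiv 1$, so Lemma 2.3 requires $T$ to be $\alpha$-averaged with $\alpha<1$, whereas Lemma 2.2 only supplies nonexpansiveness ($\alpha=1$), for which $\beta_k\equiv 1$ violates $\limsup_k\beta_k<1/\alpha$. The paper silently skips this point (it asserts nonexpansiveness and then invokes ``the direct application of Lemma 2.3''), so your flag identifies a real weakness in the published argument rather than a defect of your own. Be aware, though, that your proposed repair is itself only a sketch: the PDFP$^{2}$O map is not a literal composition of $I-\gamma\nabla f$, $I-\mathrm{prox}_{(\gamma/\lambda)\tilde h}$ and $I-\lambda\tilde D\tilde D^{T}$, so the standard composition rule for averaged operators does not apply verbatim; one must rework the nonexpansiveness estimate of [1] to keep the slack terms and extract an explicit $\alpha<1$ (which is where the strict inequality $0<\gamma<2\beta$ enters). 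In short: same decomposition and same key lemma as the paper, plus an honest identification --- but only a partial resolution --- of the averagedness step that the paper omits.
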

\begin{proof}
Let us define the functions $f$, $g$, and $h$ are the ones defined
in Section 4.2 and $D=I_{\mathcal{X}^{N}}$. Then the iterates
$((v^{k+1}_{ n} )^{N} _{n=1}, (y^{k+1}_{ n} )^{N} _{n=1}, (x^{k+1}_{
n} )^{N} _{n=1})$ described by Equations (4.4) coincide with the
iterates $(v^{k+1}, y^{k+1}, x^{k+1})$ described by Equations (3.4).
If we write these equations more compactly as $(\tilde{v}^{k+1},
x^{k+1}) = T(\tilde{v}^{k}, x^{k})$ where
$\tilde{v}^{k}=(v^{k},y^{k})^{T}$ , the operator $T$ acts in the
space
$\mathcal{V}=\mathcal{X}^{N}\times\mathcal{X}^{N}\times\mathcal{X}^{N}$,
then Lemma 2.2 shows that $T$ is nonexpansive. Defining the
selection operator $\mathcal{S}_{n }$ on $\mathcal{V}$ as
$\mathcal{S}_{n }(\tilde{v}, x) = (\tilde{v}_{n}, x_{n})$, we obtain
that $\mathcal{V} = \mathcal{S}_{1 }(\mathcal{V})\times \cdots\times
\mathcal{S}_{N }(\mathcal{V})$ up to an element reordering. To be
compatible with the notations of Definition 2.4, we assume that $J =
N$ and that the random sequence $\zeta^{k}$ driving the
SMSPDFP$^{2}$O algorithm is set valued in
$\{\{1\},\ldots\{N\}\}\subset2^{\mathcal{J}}$. In order to establish
Theorem 4.2, we need to show that the iterates $(\tilde{v}^{k+1},
x^{k+1})$ provided by the SMSPDFP$^{2}$O algorithm are those who
satisfy the equation $(\tilde{v}^{k+1}, x^{k+1}) =
T^{(\zeta^{k+1})}(\tilde{v}^{k}, x^{k})$.
 By the direct application of Lemma 2.3, we can obtain Theorem 4.2.
If we write $(\tilde{\delta}^{k+1}, \sigma^{k+1}) = T(\tilde{v}^{k},
x^{k})$ where $\tilde{\delta}^{k+1}=(\mu^{k+1},\nu^{k+1})^{T}$, then
by Eq. (3.4a),

$$\mu^{k+1}_{n}=x^{k}_{n}-\gamma\nabla f_{n}(x^{k}_{n})+(1-\lambda)v^{k}_{n}-\lambda y^{k}_{n}-(\bar{x}^{k}-\gamma\nabla f(\bar{x}^{k})-(1-\lambda)\bar{v}^{k}-\lambda \bar{y}^{k})n=1,\ldots N.$$
Observe that in general, $\bar{v}^{k}\neq 0$ because in the
SMSPDFP$^{2}$O algorithm, only one component is updated at a time.
If $\{n\} = \zeta^{k+1}$, then $v^{k+1}_{n} = \mu^{k+1}_{n}$ which
is Eq. (4.5a). All other components of $v^{k}$ are carried over to $
v^{k+1}$ .\\
 By Equation (3.4b) and (3.4c) we also get
$$\nu^{k+1}_{n}=(I-prox_{\frac{\gamma}{\lambda}g_{n}})(x^{k}_{n}-\gamma\nabla f_{n}(x^{k}_{n})+(1-\lambda)y^{k}_{n}-\lambda v^{k}_{n}),$$
$$\sigma^{k+1}_{n}=x^{k}_{n}-\gamma\nabla f_{n}(x^{k}_{n})-\lambda
v^{k+1}_{n}-\lambda y^{k+1}_{n}.$$ If $\{n\} = \zeta^{k+1}$, then
$y^{k+1}_{n}=\nu^{k+1}_{n}$, $x^{k+1}_{n}=\sigma^{k+1}_{n}$ can
easily be shown to be given by (4.5b) and (4.5c).

\end{proof}

\section{Distributed optimization}

~~~~Consider a set of $N > 1$ computing agents that cooperate to
solve the minimization problem (4.1). Here, $f_{n}$, $g_{n}$ are two
private functions available at agent $n$. Our purpose is to
introduce a random distributed algorithm to  solve (4.1). The
algorithm is asynchronous in the sense that some components of the
network are allowed to wake up at random and perform local updates,
while the rest of the network stands still. No coordinator or global
clock is needed. The frequency of activation of the various network
components is likely to vary.

The examples of this problem appear in learning applications where
massive training data sets are distributed over a network and
processed by distinct machines [5], [6], in resource allocation
problems for communication networks [7], or in statistical
estimation problems by sensor networks [8], [9].
\subsection{Network model and problem formulation}
We consider the network as a graph $G = (Q,E)$ where $Q = \{1,
\cdots ,N\}$ is the set of agents/nodes and $E\subset \{1, \cdots
,N\}^{2}$ is the set of undirected edges. We write $n\sim m$
whenever ${n,m}\in E$. Practically, $n\sim m $ means that agents $n$
and $m$ can communicate with each other.

\begin{ass}
$G$ is connected and has no self loop.
\end{ass}

Now we introduce some notations. For any $x\in\mathcal{X}^{|Q|}$, we
denote by $x_{n}$ the components of $x$, i.e., $x = (x_{n})_{n\in
Q}$. We redard the functions $f$ and $g$ on
$\mathcal{X}^{|Q|}\rightarrow(-\infty,+\infty]$ as $f(x)=\sum_{n\in
Q}f_{n}(x_{n})$ and $g(x)=\sum_{n\in Q}g_{n}(x_{n})$. So the problem
(4.1) is equal to the minimization of $f(x)+g(x)$ under the
constraint that all components of $x$ are equal.

Next we write the latter constraint in a way that involves the graph
$G$. We replace the global consensus constraint by a modified
version of the function $\iota_{\mathcal{C}}$ . The purpose of us is
to ensure global consensus through local consensus over every edge
of the graph.

For any $\varepsilon\in E$, say $\varepsilon= \{n,m\}\in Q$ , we
define the linear operator $D_{\varepsilon}(x) : \mathcal{X}^{|Q|}
\rightarrow \mathcal{X}^{2}$ as $D_{\varepsilon}(x) = (x_{n},
x_{m})$ where we assume some ordering on the nodes to avoid any
ambiguity on the definition of $D$. We construct the linear operator
$D:\mathcal{X}^{|Q|} \rightarrow\mathcal{Y}\triangleq
\mathcal{X}^{2|Q|}$ as $D(x)=(D_{\varepsilon}(x))_{\varepsilon\in
E}$ where we also assume some ordering on the edges. Any vector $y
\in \mathcal{Y}$ will be written as $y =
(y_{\varepsilon})_{\varepsilon\in E}$ where, writing $\varepsilon=
\{n,m\} \in E$, the component $y_{\varepsilon}$ will be represented
by the couple $y_{\varepsilon}= (y_{\varepsilon}(n),
y_{\varepsilon}(m))$ with $n < m$. We also introduce the subspace of
 $\mathcal{X}^{2}$ defined as $\mathcal{C}_{2} = \{(x, x) : x \in \mathcal{X}\}$. Finally, we define $h : \mathcal{Y} \rightarrow (
-\infty,+\infty]$ as

$$h(y)=\sum_{\varepsilon\in E}\iota_{\mathcal{C}_{2}}(y_{\varepsilon}).\eqno{(5.1)}$$
Then we consider the following problem:
$$\min_{x\in\mathcal{X}^{|Q|}} f(x)+g(x)+ (h\circ D)(x).\eqno{(5.2)}$$

\begin{lem}
([2]). Let Assumptions 5.1 hold true.  The minimizers of (5.2) are
the tuples $(x^{\ast}, \cdots , x^{\ast})$ where $x^{\ast}$ is any
minimizer of (4.1).
\end{lem}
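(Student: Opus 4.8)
The plan is to show that the composite term $(h\circ D)(x)$ acts precisely as the indicator function of the global consensus subspace $\mathcal{C}=\{x\in\mathcal{X}^{|Q|}: x_1=\cdots=x_N\}$ introduced in Section 4.2, after which problem (5.2) collapses onto problem (4.1) by a direct substitution exploiting the separable structure of $f$ and $g$.

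First I would unwind the definitions. Since $D(x)=(D_{\varepsilon}(x))_{\varepsilon\in E}$ with $D_{\varepsilon}(x)=(x_n,x_m)$ for $\varepsilon=\{n,m\}$, and since $h(y)=\sum_{\varepsilon\in E}\iota_{\mathcal{C}_2}(y_{\varepsilon})$ by (5.1), I obtain
$$(h\circ D)(x)=\sum_{\{n,m\}\in E}\iota_{\mathcal{C}_2}\big((x_n,x_m)\big).$$
Each summand equals $0$ when $x_n=x_m$ and $+\infty$ otherwise, so the whole sum is $0$ exactly when $x_n=x_m$ holds for every edge $\{n,m\}\in E$, and $+\infty$ as soon as a single edge constraint is violated. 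Hence $(h\circ D)=\iota_{\mathcal{C}_E}$, where $\mathcal{C}_E:=\{x\in\mathcal{X}^{|Q|}: x_n=x_m \text{ for all } \{n,m\}\in E\}$ is the edge-consensus set.

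The crux of the argument is to identify $\mathcal{C}_E$ with the full consensus subspace $\mathcal{C}$. The inclusion $\mathcal{C}\subseteq\mathcal{C}_E$ is immediate. For the reverse inclusion I would invoke Assumption 5.1: because $G$ is connected, any two nodes $n,m\in Q$ are joined by a path $n=q_0\sim q_1\sim\cdots\sim q_L=m$ in $G$. If $x\in\mathcal{C}_E$, then $x_{q_i}=x_{q_{i+1}}$ along each edge of the path, and chaining these equalities yields $x_n=x_m$. Since $n,m$ are arbitrary, all components coincide, so $x\in\mathcal{C}$; thus $\mathcal{C}_E=\mathcal{C}$ and $(h\circ D)=\iota_{\mathcal{C}}$. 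The no-self-loop hypothesis merely discards the vacuous constraints $x_n=x_n$ and is not essential.

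Finally I would substitute back. On $\mathcal{C}$, any feasible $x$ has the form $(x^{\ast},\ldots,x^{\ast})$, and then $f(x)+g(x)=\sum_{n\in Q}\big(f_n(x^{\ast})+g_n(x^{\ast})\big)$, which is exactly the objective of (4.1) evaluated at $x^{\ast}$; off $\mathcal{C}$ the objective of (5.2) is $+\infty$. Consequently $x$ minimizes (5.2) if and only if $x=(x^{\ast},\ldots,x^{\ast})$ with $x^{\ast}$ a minimizer of (4.1), which is the claimed correspondence. I do not expect any serious obstacle here: the only non-formal step is the connectivity/path argument, and it is elementary; the remaining work is routine bookkeeping with indicator functions and the separability of $f$ and $g$.
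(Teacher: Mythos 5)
Your proof is correct. Note that the paper itself gives no proof of this lemma at all --- it is stated with a bare citation to reference [2] --- and your argument (identifying $(h\circ D)$ with the indicator of edge-wise consensus, using connectedness of $G$ to upgrade edge consensus to global consensus, and then using separability of $f$ and $g$ to collapse (5.2) onto (4.1)) is precisely the standard argument the cited reference relies on, so it fills the gap rather than diverging from the paper.
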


\subsection{Instantiating the SPDFP$^{2}$O}

Now we use the SPDFP$^{2}$O to solve the problem (5.2). Since the
newly defined function $h$ is separable with respect to the
$(y_{\varepsilon})_{\varepsilon\in E}$, we get

$$prox_{\tau h}(y)=(prox_{\tau \iota_{\mathcal{C}_{2}}}(y_{\varepsilon}))_{\varepsilon\in E}=((\bar{y}_{\varepsilon}, \bar{y}_{\varepsilon}))_{\varepsilon\in E},$$
where
$\bar{y}_{\varepsilon}=(y_{\varepsilon}(n)+y_{\varepsilon}(m))/2$ if
$\varepsilon=\{n,m\}$. With this at hand, the update equation (3.4a)
of the SPDFP$^{2}$O can be  written as
$$z^{k+1}=((\bar{z}_{\varepsilon}^{k+1},\bar{z}_{\varepsilon}^{k+1}))_{\varepsilon\in E},$$
where
\begin{align*}
\bar{z}^{k+1}&=\frac{x^{k}_{n}-\gamma\nabla f_{n}(x^{k}_{n})-\lambda y^{k}_{n}-\lambda d_{n}v_{n}^{k}+x^{k}_{m}-\gamma\nabla f_{m}(x^{k}_{m})-\lambda y^{k}_{m}-\lambda d_{m}v_{m}^{k}}{2}\\
&+\frac{v_{\varepsilon}^{k}(n)+v_{\varepsilon}^{k}(m)}{2},
\end{align*}
 for any $\varepsilon= \{n,m\}\in E$. and $ d_{n}x_{n}$ coincides with the $n$-th component of the vector
$D^{T}Dx$
 , $ d_{n}$ is the degree (i.e., the number of
neighbors) of node $n$. Plugging this equality into Eq. (3.4a), it
can be seen that $v_{\varepsilon}^{k}(n)=-v_{\varepsilon}^{k}(m)$.
Therefore
$$\bar{z}^{k+1}=\frac{x^{k}_{n}-\gamma\nabla f_{n}(x^{k}_{n})-\lambda y^{k}_{n}-\lambda d_{n}v_{n}^{k}+x^{k}_{m}-\gamma\nabla f_{m}(x^{k}_{m})-\lambda y^{k}_{m}-\lambda d_{m}v_{m}^{k}}{2},$$
for any $k \geq 1$. Moreover
$$v_{\varepsilon}^{k+1}=\frac{x^{k}_{n}-\gamma\nabla f_{n}(x^{k}_{n})-\lambda y^{k}_{n}-\lambda d_{n}v_{n}^{k}-(x^{k}_{m}-\gamma\nabla f_{m}(x^{k}_{m})-\lambda y^{k}_{m}-\lambda d_{m}v_{m}^{k})}{2}+v_{\varepsilon}^{k}(n).$$
 From (3.4b) and (3.4c), the $n^{th}$
component of $y^{k+1}$ and $  x^{k+1}$ can be written
$$y^{k+1}_{n}=(I-prox_{\frac{\gamma}{\lambda}g_{n}})(x^{k}_{n}-\gamma\nabla f_{n}(x^{k}_{n})+(1-\lambda)y^{k}_{n}-\lambda D^{T}v^{k}_{n}),$$
$$x^{k+1}_{n}=x^{k}_{n}-\gamma\nabla f_{n}(x^{k}_{n})-\lambda D^{T}
v^{k+1}_{n}-\lambda y^{k+1}_{n},$$ where for any $v\in\mathcal{Y}$,

$$(D^{T}v)_{n}=\sum_{m:\{n,m\}\in E}v_{\{n,m\}}(n)$$
is the $n$-th component of $D^{T}v \in \mathcal{X}^{|Q |}$. Plugging
Eq. (3.4b) and (3.4c) together with the expressions of
$\bar{z}^{k+1}_{\{n,m\}}$ and $v^{k+1}_{\{n,m\}}$, we can have
$$y^{k+1}_{n}=(I-prox_{\frac{\gamma}{\lambda}g_{n}})(x^{k}_{n}-\gamma\nabla f_{n}(x^{k}_{n})+(1-\lambda)y^{k}_{n}-\lambda \sum_{m:\{n,m\}\in E}v_{\{n,m\}}^{k}(n)),$$

\begin{align*}
x^{k+1}_{n}&=x^{k}_{n}-\gamma\nabla f_{n}(x^{k}_{n})-\lambda
\sum_{m:\{n,m\}\in
E}v_{\{n,m\}}^{k}(n) -\lambda y^{k+1}_{n}\\
&-\lambda \sum_{m:\{n,m\}\in E}(\frac{x^{k}_{n}-\gamma\nabla
f_{n}(x^{k}_{n})-\lambda y^{k}_{n}-\lambda
d_{n}v_{n}^{k}-(x^{k}_{m}-\gamma\nabla f_{m}(x^{k}_{m})-\lambda
y^{k}_{m}-\lambda d_{m}v_{m}^{k})}{2}).
\end{align*}
The algorithm is finally described by the following procedure: Prior
to the clock tick $k + 1$, the node $n$ has in its memory the
variables $x^{k}_{n}$, $y^{k}_{n}$,
$\{v_{\{n,m\}}^{k}(n)\}_{m\thicksim n}$, $\{x^{k}_{m}\}_{m\thicksim
n}$ and  $\{y^{k}_{m}\}_{m\thicksim n}$.
\begin{algorithm}[H]
\caption{Distributed SPDFP$^{2}$O.}
\begin{algorithmic}\label{1}
\STATE Initialization: Choose $x^{0}, y^{0}\in \mathcal{X}$,
$v^{0}\in
\mathcal{Y}$, s.t. $\sum_{n}v^{0}_{n}=0$ , $0<\lambda\leq1/(\lambda_{\max}(DD^{T})+1)$,  $0<\gamma<2\beta$.\\
Do
$$
\begin{array}{l}
\bullet ~~For~ any~ n \in Q ,~ Agent ~n ~performs ~the~ following~
operations: \\
~~~~v_{\{n,m\}}^{k+1}(n)=\frac{x^{k}_{n}-\gamma\nabla f_{n}(x^{k}_{n})-\lambda y^{k}_{n}-\lambda d_{n}v_{n}^{k}-(x^{k}_{m}-\gamma\nabla f_{m}(x^{k}_{m})-\lambda y^{k}_{m}-\lambda d_{m}v_{m}^{k})}{2}+v_{\{n,m\}}^{k}(n),\\
~~~~~~~~~~~~~~~~~~~~for~all~m\thicksim n,~~~~~~~~~~~~~~~~~~~~~~~~~~~~~~~~~~~~~~~~~~~~~~~~~~~~~~~~~~~(5.3a)\\
~~~~~~~~~~y^{k+1}_{n}=(I-prox_{\frac{\gamma}{\lambda}g_{n}})(x^{k}_{n}-\gamma\nabla f_{n}(x^{k}_{n})+(1-\lambda)y^{k}_{n}\\
~~~~~~~~~~~~~~~~~~-\lambda \sum_{m:\{n,m\}\in E}v_{\{n,m\}}^{k}(n)),~~~~~~~~~~~~~~~~~~~~~~~~~~~~~~~~~~~~~~~~~~~~~~(5.3b)\\
~~~~~~~~~~x^{k+1}_{n}=x^{k}_{n}-\gamma\nabla
f_{n}(x^{k}_{n})-\lambda \sum_{m:\{n,m\}\in
E}v_{\{n,m\}}^{k}(n) -\lambda y^{k+1}_{n}\\
~~~~~~~~~~~~~~~~~~-\lambda \sum_{m:\{n,m\}\in
E}(\frac{x^{k}_{n}-\gamma\nabla f_{n}(x^{k}_{n})-\lambda
y^{k}_{n}-\lambda d_{n}v_{n}^{k}-(x^{k}_{m}-\gamma\nabla
f_{m}(x^{k}_{m})-\lambda
y^{k}_{m}-\lambda d_{m}v_{m}^{k})}{2}).\\
~~~~~~~~~~~~~~~~~~~~~~~~~~~~~~~~~~~~~~~~~~~~~~~~~~~~~~~~~~~~~~~~~~~~~~~~~~~~~~~~~~~~~~~~~~~~~~~~~~~~(5.3c)\\
 \bullet ~~Agent~ n ~sends~ the~ parameter~ y^{k+1}_{n},
x^{k+1}_{n}
 ~to~ their~ neighbors~ respectively.\\
 \bullet ~~Increment~ k.
\end{array}
$$

\end{algorithmic}
\end{algorithm}

\begin{thm}
 Suppose $0 <\gamma< 2\beta$ and $0<\lambda\leq1/(\lambda_{\max}(DD^{T})+1)$, and let Assumption 4.1 and 5.1 hold true.  Let $u^{k} = (v^{k},y^{k}, x^{k})$ be
the sequence generated by Distributed SPDFP$^{2}$O for any initial
point $( v^{0}, y^{0}, x^{0})$ . Then for all $n \in Q$ the sequence
$(x^{k}_{n})_{k\in \mathbb{N}}$ converges to a solution of problem
(4.1).
\end{thm}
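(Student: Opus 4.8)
The plan is to identify the Distributed SPDFP$^{2}$O with the SPDFP$^{2}$O of Algorithm 1 applied to the consensus reformulation (5.2), and then to transport its convergence back to (4.1) through Lemma 5.2. With $f(x)=\sum_{n\in Q}f_{n}(x_{n})$, $g(x)=\sum_{n\in Q}g_{n}(x_{n})$, the edge-wise indicator $h$ of (5.1) and the node-to-edge operator $D$, problem (5.2) is an instance of (1.1); so it is enough to (a) check that the iterations (5.3a)--(5.3c) are exactly the equations (3.4a)--(3.4c) written in graph coordinates, and (b) verify that the hypotheses of Theorem 3.2 hold for this data. This mirrors the structure of the proof of Theorem 4.2.

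First I would carry out the algebraic reduction sketched in Section 5.2, namely confirm that (5.3) is the verbatim specialization of (3.4). Three facts are needed. (i) Since $h$ is separable across edges, $prox_{\frac{\gamma}{\lambda}h}$ acts edge by edge as the averaging map $(a,b)\mapsto((a+b)/2,(a+b)/2)$, so $I-prox_{\frac{\gamma}{\lambda}h}$ sends each edge pair to a vector of the form $(s,-s)$. (ii) The operator $D^{T}D$ is of graph-Laplacian type, its $n$-th diagonal block being multiplication by the degree $d_{n}$, which is how $d_{n}$ enters (5.3a) and (5.3c). (iii) As a consequence of (i), the dual variable satisfies the antisymmetry $v^{k}_{\varepsilon}(n)=-v^{k}_{\varepsilon}(m)$ for every edge $\varepsilon=\{n,m\}$ and every $k\ge 1$, since each $v^{k+1}_{\varepsilon}$ is automatically of the form $(s,-s)$. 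Point (iii) is the crux: it is what removes the averaged term from (3.4a) and collapses it into the purely local update (5.3a).

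Next I would verify that Theorem 3.2 applies to (5.2). Injectivity of $D$ (Assumption 2.1(1)) follows from Assumption 5.1: if $D(x)=0$ then $x_{n}=x_{m}=0$ on every edge, and since $G$ is connected with $N>1$ every node lies on some edge, forcing $x=0$. The Lipschitz hypothesis on $\nabla f$ (Assumption 2.1(2)) is inherited from Assumption 4.1(1) by separability, with the same constant $1/\beta$. The admissible step-size range $0<\lambda\le 1/(\lambda_{\max}(DD^{T})+1)$ is precisely the one for Algorithm 1 once one recalls, as in the proof of Theorem 3.2, that $\lambda_{\max}(\tilde{D}\tilde{D}^{T})=\lambda_{\max}(DD^{T})+1$ for $\tilde{D}=(D,I)^{T}$. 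Finally, the qualification condition of Assumption 2.2 for (5.2) is supplied by Assumption 4.1(3)--(4): the infimum is attained, which transfers through Lemma 5.2, and $\cap_{n}ri\,dom\,g_{n}\neq\emptyset$ produces a consensus point witnessing $0\in ri(dom\,h-D\,dom\,g)$.

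With these checks done, the iterations (5.3) are the action of the nonexpansive operator $T$ of Lemma 2.2, and Theorem 3.2 guarantees that the primal sequence $x^{k}$ converges to a minimizer $x^{\star}$ of (5.2). By Lemma 5.2, $x^{\star}=(x^{\ast},\dots,x^{\ast})$ with $x^{\ast}$ a minimizer of (4.1); hence $x^{k}_{n}\to x^{\ast}$ for every $n\in Q$, which is the claim. I expect the main obstacle to be the bookkeeping of the second paragraph: one must be scrupulous that the antisymmetry (iii) and the degree terms are handled correctly so that (5.3) is genuinely $T$ written coordinate-by-coordinate and not merely an approximately equivalent scheme. Everything after that identification is a direct appeal to Theorem 3.2 and Lemma 5.2, exactly as in the proof of Theorem 4.2; should one instead wish to activate the nodes asynchronously, the same coordinate-descent argument via Lemma 2.3 would apply.
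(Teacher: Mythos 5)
Your proposal is correct and follows essentially the same route the paper intends: the paper states this theorem without an explicit proof, relying on the derivation in Section 5.2 that identifies the Distributed SPDFP$^{2}$O with Algorithm 1 applied to problem (5.2), after which convergence follows from Theorem 3.2 and the consensus identification of Lemma 5.1. Your write-up is in fact more complete than the paper's, since you explicitly verify the hypotheses (injectivity of $D$ via connectedness, the Lipschitz constant under separability, the step-size bound $\lambda_{\max}(\tilde{D}\tilde{D}^{T})=\lambda_{\max}(DD^{T})+1$, and the qualification condition) and isolate the dual antisymmetry $v^{k}_{\varepsilon}(n)=-v^{k}_{\varepsilon}(m)$ as the key algebraic fact, all of which the paper leaves implicit.
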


\subsection{A Distributed asynchronous splitting primal-dual fixed
point algorithm} In this section, we use the randomized coordinate
descent on the above algorithm, we call this algorithm as
distributed asynchronous splitting primal-dual fixed point algorithm
(DSSPDFP$^{2}$O). This algorithm has the following attractive
property: at each iteration, a single agent, or possibly a subset of
agents chosen at random, are activated. Moreover, if we let
$(\zeta^{k})_{k\in \mathbb{N}}$ be a sequence of i.i.d. random
variables valued in $2^{ Q}$. The value taken by $\zeta^{k}$
represents the agents that will be activated and perform a prox on
their $x$ variable at moment $k$. The asynchronous algorithm goes as
follows:
\begin{algorithm}[H]
\caption{DASPDFP$^{2}$O.}
\begin{algorithmic}\label{1}
\STATE Initialization: Choose $x^{0}, y^{0}\in \mathcal{X}$,
$v^{0}\in
\mathcal{Y}$,  $0<\lambda\leq1/(\lambda_{\max}(DD^{T})+1)$,  $0<\gamma<2\beta$.\\
Do
$$
\begin{array}{l}
\bullet ~~Select~ a~ random~ set ~of~ agents~ \zeta^{k+1} =\mathcal{B}. \\
\bullet ~~For~ any~ n \in \mathcal{B},~ Agent~ n~ performs~ the
~following~
operations:\\
~~~~-For ~all~ m \thicksim n, do\\
~~~~~~~~v_{\{n,m\}}^{k+1}(n)=\frac{x^{k}_{n}-\gamma\nabla f_{n}(x^{k}_{n})-\lambda y^{k}_{n}-\lambda d_{n}v_{n}^{k}-(x^{k}_{m}-\gamma\nabla f_{m}(x^{k}_{m})-\lambda y^{k}_{m}-\lambda d_{m}v_{m}^{k})}{2}\\
~~~~~~~~~~~~~~~~~~~~~~~~+\frac{v_{\{n,m\}}^{k}(n)-v_{\{n,m\}}^{k}(m)}{2},\\
~~~~-y^{k+1}_{n}=(I-prox_{\frac{\gamma}{\lambda}g_{n}})(x^{k}_{n}-\gamma\nabla f_{n}(x^{k}_{n})+(1-\lambda)y^{k}_{n}\\
~~~~~~~~~~~~~~~~~-\lambda \sum_{m:\{n,m\}\in E}v_{\{n,m\}}^{k}(n)),\\
~~~~-x^{k+1}_{n}=x^{k}_{n}-\gamma\nabla f_{n}(x^{k}_{n})+\lambda
\sum_{m:\{n\thicksim m\}\in
E}v_{\{n,m\}}^{k}(m) -\lambda y^{k+1}_{n}\\
~~~~~~~~~~~~~-\lambda \sum_{m:\{n,m\}\in
E}(\frac{x^{k}_{n}-\gamma\nabla f_{n}(x^{k}_{n})-\lambda
y^{k}_{n}-\lambda d_{n}v_{n}^{k}-(x^{k}_{m}-\gamma\nabla
f_{m}(x^{k}_{m})-\lambda
y^{k}_{m}-\lambda d_{m}v_{m}^{k})}{2}).\\
~~~~-For ~all~ m \thicksim n,~ send
\{y^{k+1}_{n},x^{k+1}_{n},v_{\{n,m\}}^{k+1}(n)\}~to~ Neighbor~
m.\\
\bullet ~~For ~any~ agent ~n\neq\mathcal{B}, ~
y^{k+1}_{n}=y^{k}_{n}, x^{k+1}_{n}=x^{k}_{n},
~and~~v_{\{n,m\}}^{k+1}(n)=~v_{\{n,m\}}^{k}(n)\\
~~~~for~ all~ m \thicksim n.\\
 \bullet ~~Increment~ k.
\end{array}
$$

\end{algorithmic}
\end{algorithm}

\begin{ass}
The collections of sets $\{\mathcal{B}_{1},\mathcal{B}_{2},\ldots\}$
such that $\mathbb{P}[\zeta^{1} = \mathcal{B}_{i}]$ is positive
satisfies $ \bigcup\mathcal{B}_{i} =Q$.
\end{ass}

\begin{thm}
 Suppose $0 <\gamma< 2\beta$ and $0<\lambda\leq1/(\lambda_{\max}(DD^{T})+1)$, and let Assumption 4.1, 5.1 and 5.2 hold true.  Let $(u^{k}_{n})_{n\in Q} = (v^{k}_{n},y^{k}_{n}, x^{k}_{n})_{n\in Q}$ be
the sequence generated by DASPDFP$^{2}$O for any initial point $(
v^{0}, y^{0}, x^{0})$ . Then  the sequence
$x^{k}_{1},\ldots,x^{k}_{|Q|}$ converges to a solution of problem
(4.1).
\end{thm}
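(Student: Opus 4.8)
The plan is to follow the same scheme used in the proof of Theorem 4.2: identify DASPDFP$^{2}$O as a randomized coordinate descent (a randomized Krasnosel'skii--Mann iteration in the sense of Definition 2.4) applied to the nonexpansive operator driving the synchronous Distributed SPDFP$^{2}$O, then invoke Lemma 2.3 to obtain almost sure convergence to a fixed point, and finally read off a solution of (4.1) via Lemma 5.1.

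First I would fix the data $f$, $g$, $h$ and $D$ exactly as constructed in Section 5.1, so that (5.2) is an instance of (1.1) posed on $\mathcal{X}^{|Q|}$ with dual space $\mathcal{Y}=\mathcal{X}^{2|Q|}$. By the computation carried out in Section 5.2, the synchronous Distributed SPDFP$^{2}$O (Algorithm 4) is precisely the SPDFP$^{2}$O of Section 3 applied to (5.2). Writing its iteration compactly as $(\tilde v^{k+1},x^{k+1})=T(\tilde v^{k},x^{k})$ with $\tilde v=(v,y)$, the operator $T$ acts on $\mathcal{V}=\mathcal{Y}\times\mathcal{X}^{|Q|}\times\mathcal{X}^{|Q|}$, and Lemma 2.2 guarantees that $T$ is nonexpansive under $\|\cdot\|_{\lambda}$; moreover the firmly nonexpansive factors supplied by Lemma 2.1 make $T$ averaged, as required by Lemma 2.3. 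I would then decompose $\mathcal{V}$ into $|Q|$ coordinate blocks, one per agent: block $n$ collects the variables physically held by node $n$, namely $x_n$, $y_n$ and the half-edge duals $\{v_{\{n,m\}}(n)\}_{m\sim n}$. With selection operators $\mathcal{S}_n$ defined accordingly one has $\mathcal{V}=\mathcal{S}_1(\mathcal{V})\times\cdots\times\mathcal{S}_{|Q|}(\mathcal{V})$ up to reordering, placing us in the setting of Definition 2.4 with $J=|Q|$ and $\zeta^{k}$ valued in $2^{Q}$.

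The decisive step is to verify the coordinate correspondence $(\tilde v^{k+1},x^{k+1})=\hat T^{(\zeta^{k+1})}(\tilde v^{k},x^{k})$: when $\zeta^{k+1}=\mathcal{B}$, for every $n\in\mathcal{B}$ the three updates performed by agent $n$ in DASPDFP$^{2}$O must coincide with the block-$n$ output $T_n(\tilde v^{k},x^{k})$ of the synchronous operator, while the blocks $n\notin\mathcal{B}$ stay frozen. This is where the main obstacle lies, and it is graph-specific. The dual variable attached to an edge $\varepsilon=\{n,m\}$ is split into the two half-components $v_{\varepsilon}(n)$ and $v_{\varepsilon}(m)$, owned by the two endpoints and therefore lying in two different blocks. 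In the synchronous algorithm the antisymmetry $v_{\varepsilon}^{k}(n)=-v_{\varepsilon}^{k}(m)$ holds for all $k\geq 1$ and the update of $v_{\varepsilon}(n)$ closes with the term $+\,v_{\varepsilon}^{k}(n)$; once only one endpoint is activated this antisymmetry is destroyed, which is exactly why Algorithm 5 replaces that term by the symmetrized quantity $\tfrac12\big(v_{\{n,m\}}^{k}(n)-v_{\{n,m\}}^{k}(m)\big)$. I would check that this symmetrized expression is literally the $v_{\varepsilon}(n)$-coordinate of $T(\tilde v^{k},x^{k})$ --- that is, the $n$-slot of the proximity step on $\iota_{\mathcal{C}_2}$ evaluated at the current, possibly non-antisymmetric, state --- so that updating agent $n$'s half-coordinates while leaving $v_{\varepsilon}(m)$ untouched is a genuine coordinate restriction $\hat T^{(\mathcal{B})}$ rather than an approximation of it; the same direct substitution then shows the $y_n$- and $x_n$-updates reproduce the corresponding components of $T$.

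Finally I would discharge the hypotheses of Lemma 2.3. Existence of a fixed point follows because (5.2) admits a minimizer (Assumption 4.1(3) together with Lemma 5.1) and the qualification condition Assumption 4.1(4) produces an associated dual variable, so $\mathrm{Fix}(T)\neq\emptyset$. The sequence $(\zeta^{k})$ is i.i.d. by hypothesis, and Assumption 5.2, $\bigcup\mathcal{B}_i=Q$, ensures that every agent --- hence every coordinate block --- lies in some activated set of positive probability, which is precisely Condition (2.1). Since the per-agent updates carry no relaxation, DASPDFP$^{2}$O is the iteration (2.2) with $\beta_k=1$ (admissible because $T$ is averaged; otherwise one relaxes to any $\beta_k\in(0,1)$ and appeals to the nonexpansive case of Lemma 2.3). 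Lemma 2.3 then yields that $(\tilde v^{k},x^{k})$ converges almost surely to some $(\tilde v^{\ast},x^{\ast})\in\mathrm{Fix}(T)$. The $x$-part of any such fixed point minimizes (5.2), and by Lemma 5.1 it has the consensus form $(x^{\ast},\dots,x^{\ast})$ with $x^{\ast}$ a solution of (4.1); therefore each coordinate sequence $x^{k}_n$ converges to $x^{\ast}$, as claimed.
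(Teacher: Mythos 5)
Your proposal follows essentially the same route as the paper's own proof: reformulate via the Section~5 construction of $f,g,h,D$, write the synchronous iteration as $(\tilde v^{k+1},x^{k+1})=T(\tilde v^{k},x^{k})$ with $T$ nonexpansive by Lemma~2.2, decompose $\mathcal{V}$ into per-agent blocks $(x_n,y_n,\{v_{\{n,m\}}(n)\}_{m\sim n})$, verify that DASPDFP$^{2}$O is the coordinate restriction $\hat T^{(\zeta^{k+1})}$, and conclude by Lemma~2.3 and Lemma~5.1. If anything you are more careful than the paper on two points it glosses over --- the loss of the antisymmetry $v_\varepsilon^k(n)=-v_\varepsilon^k(m)$ (your symmetrized term $\tfrac12(v_{\{n,m\}}^k(n)-v_{\{n,m\}}^k(m))$ matches Algorithm~5, whereas the paper's displayed $\mu_\varepsilon^{k+1}$ carries a ``$+$'' that appears to be a sign typo) and the averagedness/relaxation-parameter hypothesis needed to take $\beta_k=1$ in Lemma~2.3.
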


\begin{proof}
Let us define $f$, $g$, $h$ and $D$ are the ones defined in the
problem 5.2. By Equations (3.4). We write these equations more
compactly as $(\tilde{v}^{k+1}, x^{k+1}) = T(\tilde{v}^{k}, x^{k})$
where $\tilde{v}^{k}=(v^{k},y^{k})^{T}$ , the operator $T$ acts in
the space
$\mathcal{V}=\mathcal{R}\times\mathcal{X}^{|Q|}\times\mathcal{X}^{|Q|}$,
and $\mathcal{R}$ is the image of $\mathcal{X}^{|Q|}$ by $D$. then
by Lemma 2.2 we know $T$ is nonexpansive. Defining the selection
operator $\mathcal{S}_{n }$ on $\mathcal{V}$ as $\mathcal{S}_{n
}(\tilde{v}, x) = (\tilde{v}_{\varepsilon}(n)_{\varepsilon\in
Q:n\in\varepsilon}, x_{n})$, where
$\tilde{v}_{\varepsilon}(n)_{\varepsilon\in
Q:n\in\varepsilon}=(v_{\varepsilon}(n)_{\varepsilon\in
Q:n\in\varepsilon}, y_{n})^{T}$. So, we obtain that $\mathcal{V} =
\mathcal{S}_{1 }(\mathcal{V})\times \cdots\times \mathcal{S}_{|Q|
}(\mathcal{V})$ up to an element reordering. Identifying the set
$\mathcal{J}$ introduced in the notations of Definition 2.4 with
$Q$, the operator $T^{(\zeta^{k})}$ is defined as follows:
$$
\mathcal{S}_{n }(T^{(\zeta^{k})}(\tilde{v}, x)) = \left\{
\begin{array}{l}
\mathcal{S}_{n }(T(\tilde{v}, x)),\,\,\, \,\,if \,n\,
\in\zeta^{k},\\
\mathcal{S}_{n }(\tilde{v}, x),\,\,\,\,\,\,\,\,\,\,\, \,\,if \,n\,
\neq\zeta^{k} .
\end{array}
\right.
$$
Then by Lemma 2.3, we know the sequence $(\tilde{v}^{k+1}, x^{k+1})
= T^{(\zeta^{k+1})}(\tilde{v}^{k}, x^{k})$ converges almost surely
to a solution of problem (1). Moreover, from Lemma 5.1, we have the
sequence $x^{k}$ converges almost surely to a solution of problem
(4.1).

Therefore we need to show that the operator $T^{(\zeta^{k+1})}$ is
translated into the DASPDFP$^{2}$O algorithm. If we write
$(\tilde{\delta}^{k+1}, \sigma^{k+1}) = T(\tilde{v}^{k}, x^{k})$
where $\tilde{\delta}^{k+1}=(\mu^{k+1},\nu^{k+1})^{T}$, then by Eq.
(3.4a),
\begin{align*}
\mu_{\varepsilon}^{k+1}&=\frac{x^{k}_{n}-\gamma\nabla f_{n}(x^{k}_{n})-\lambda y^{k}_{n}-\lambda d_{n}v_{n}^{k}-(x^{k}_{m}-\gamma\nabla f_{m}(x^{k}_{m})-\lambda y^{k}_{m}-\lambda d_{m}v_{m}^{k})}{2}\\
&+\frac{v_{\varepsilon}^{k}(n)+v_{\varepsilon}^{k}(m)}{2}.
\end{align*}
Getting back to $(\tilde{v}^{k+1}, x^{k+1}) =
T^{(\zeta^{k+1})}(\tilde{v}^{k}, x^{k})$, we have for all $n \in
\zeta^{k+1}$ and all $m \thicksim n$,  then $v^{k+1}_{\{n,m\}}(n) =
\mu^{k+1}_{\{n,m\}}(n)$ .
 By Equation (3.4b) and (3.4c) we also get
$$\nu^{k+1}_{n}=(I-prox_{\frac{\gamma}{\lambda}g_{n}})(x^{k}_{n}-\gamma\nabla f_{n}(x^{k}_{n})+(1-\lambda)y^{k}_{n}-\lambda D^{T}v^{k}_{n}),$$
$$\sigma^{k+1}_{n}=x^{k}_{n}-\gamma\nabla f_{n}(x^{k}_{n})-\lambda D^{T}
v^{k+1}_{n}-\lambda y^{k+1}_{n}.$$ Therefore, for all $n\in
\zeta^{k+1}$, then $y^{k+1}_{n}=\nu^{k+1}_{n}$,
$x^{k+1}_{n}=\sigma^{k+1}_{n}$. If we use the identity
$(D^{T}v)_{n}=\sum_{m:\{n,m\}\in E}v_{\{n,m\}}(n)$ on the above
equations, it can easy check these equations coincides with the
$x$-update $y$-update  in the DASPDFP$^{2}$O algorithm.

\end{proof}

\section{Numerical experiments}
We consider the problem of $l_{1}$-regularized logistic regression.
Denoting by $m$ the number of observations and by $q$ the number of
features, the optimization problem writes
$$\inf_{x\in \mathbb{R}^{q}}\frac{1}{m}\sum_{i=1}^{m}\log(1+e^{-y_{i}a_{i}^{T}x})+\tau\|x\|_{1},\eqno{(6.1)}$$
where the $(y_{i})_{i=1}^{m}$ are in $\{-1,+1\}$, the
$(a_{i})_{i=1}^{m}$ are in $\mathbb{R}^{q}$, and $\tau>0$ is a
scalar. Let $(\mathcal{W})_{n=1}^{N}$ indicate a partition of $\{1,
. . . ,m\}$. The optimization problem then writes

$$\inf_{x\in \mathbb{R}^{q}}\sum_{n=1}^{N}\sum_{i\in\mathcal{W}_{n}}\frac{1}{m}\log(1+e^{-y_{i}a_{i}^{T}x})+\tau\|x\|_{1},\eqno{(6.2)}$$

or, splitting the problem between the batches

$$\inf_{x\in \mathbb{R}^{N^{q}}}\sum_{n=1}^{N}(\sum_{i\in\mathcal{W}_{n}}\frac{1}{m}\log(1+e^{-y_{i}a_{i}^{T}x_{n}})+\frac{\tau}{N}\|x_{n}\|_{1})+\iota_{\mathcal{C}(x)},\eqno{(6.3)}$$
where $x = (x_{1}, ..., x_{N})$ is in  $\mathbb{R}^{N^{q}}$. It is
easy to see that problems (6.1), (6.2) and (6.3) are equivalent and
problem (6.3) is in the form of (4.2).

\noindent \textbf{Acknowledgements}

This work was supported by the National Natural Science Foundation
of China (11131006, 41390450, 91330204, 11401293), the National
Basic Research Program of China (2013CB 329404), the Natural Science
Foundations of Jiangxi Province (CA20110\\
7114, 20114BAB 201004).

\end{document}